\theoremstyle{plain}
\newtheorem{theorem}{Theorem}
\newtheorem{lemma}{Lemma}
\newtheorem{corollary}{Corollary}
\newtheorem*{corollary*}{Наслідок}
\newtheorem{proposition}{Proposition}
\theoremstyle{definition}
\theoremstyle{remark}
\newtheorem{remark}{Remark}
\newtheorem*{remark*}{Remark}
\begin{document}

\title[On phenomena connected to the  second Ostrogradsky expansion]{On metric, dynamical, probabilistic\\ and fractal  phenomena connected with \\ the second Ostrogradsky expansion}
\author[S.Albeverio, I.Pratsiovyta, G.Torbin]
{Sergio Albeverio$^{1,2,3,4,5}$, Iryna ~Pratsiovyta$^{6}$, \\ Grygoriy Torbin$^{7,8}$}

\begin{abstract}
 We consider  the second Ostrogradsky expansion from the number theory, probability theory, dynamical systems   and fractal geometry points of view, and  establish several new  phenomena connected with this expansion.

First of all we prove the singularity of the random second Ostrogradsky expansion.

Secondly we study properties of the symbolic dynamical system generated by the natural one-sided shift-transformation $T$ on the second Ostrogradsky expansion. It is shown, in particular, that there are no probability measures which are simultaneously invariant and ergodic (w.r.t. $T$) and  absolutely continuous (w.r.t. Lebesgue measure). So, the classical  ergodic approach to the development of the metric theory is not applicable for the  second Ostrogradsky expansion.

We develop instead the metric and dimensional theories for this expansion
using probabilistic methods. We show, in particular,  that  for Lebesgue almost all real numbers any digit $i$ from the  alphabet $A= \mathbb{N} $ appears  only finitely many times  in the difference-version of the second Ostrogradsky expansion, and the set of all reals with bounded digits of this expansion is of zero Hausdorff dimension.

Finally, we compare metric, probabilistic and dimensional theories for the second Ostrogradsky expansions with the corresponding theories for the Ostrogradsky-Pierce expansion and for the continued fractions expansion.

\end{abstract}

\maketitle

$^1$~Institut f\"ur Angewandte Mathematik, Universit\"at Bonn,
Endenicher Allee 60, D-53115 Bonn (Germany); $^2$HCM and ~SFB 611, Bonn; $^3$ BiBoS,
Bielefeld--Bonn; $^4$~CERFIM, Locarno and Acc. Arch., USI
(Switzerland); $^5$~IZKS, Bonn; E-mail:
albeverio@uni-bonn.de

$^{6}$~National Pedagogical University, Pyrogova str. 9, 01030 Kyiv
(Ukraine); E-mail: lightsoul2008@gmail.com

$^7$~National Pedagogical University, Pyrogova str. 9, 01030 Kyiv
(Ukraine) $^{8}$~Institute for Mathematics of NASU,
Tereshchenkivs'ka str. 3, 01601 Kyiv (Ukraine); E-mail:
torbin@iam.uni-bonn.de (corresponding author)

\textbf{AMS Subject Classifications (2010): 11K55, 28A80, 37A45, 37B10, 60G30.}

\textbf{Key words:} the second Ostrogradsky expansion,
symbolic dynamical systems, singular probability measures, Hausdorff dimension, Ostrogradsky-Pierce expansion,  continued fractions.


\section{Introduction}
The presented paper is devoted to the investigation of  the expansions
of real numbers in the  Ostrogradsky series (they were
introduced by M.~V.~Ostrogradsky, a well known ukrainian
mathematician  who lived from 1801 to 1862).

The expansion of $x$ of the form:
\begin{equation}\label{?}
x=\frac{1}{q_1}-\frac{1}{q_1q_2}+\dots
+\frac{(-1)^{n-1}}{q_1q_2\dots q_n}+\dotsb,
\end{equation}
where $q_n$ are positive integers and $q_{n+1}>q_n$ for all $n$,
is said to be the expansion of $x$ in the first Ostrogradsky
series. The expansion of $x$ of the form:
\begin{equation}\label{??}
x=\frac{1}{q_1}-\frac{1}{q_2}+\dots
+\frac{(-1)^{n-1}}{q_n}+\dotsb,
\end{equation}
where $q_n$ are positive integers and $q_{n+1}\geq q_n(q_n+1)$ for
all $n$, is said to be the expansion of $x$ in the second
Ostrogradsky series. Each irrational number has a unique expansion
of the form~\eqref{?} or~\eqref{??}. Rational numbers have two
finite different representations of the above form (see, e.g.,
\cite{Rem51}).

Shortly before his death, M.~Ostrogradsky has proposed two
algorithms for the representation of real numbers via  alternating
series of the form \eqref{?} and \eqref{??}, but he did not publish
any papers on this problems. Short Ostrogradsky's remarks concerning
the above representations have been found by E.~Ya.~Remez
\cite{Rem51} in the hand-written fund of the Academy of Sciences of
USSR. E.~Ya.~Remez has pointed out some similarities between the
Ostrogradsky series and continued fractions. He also paid a great
attention to the applications of the Ostrogradsky series for the
numerical methods for solving algebraic equations. In the editorial
comments to the book \cite{Khi61/63} B.~Gnedenko has pointed out that
there are no fundamental investigations of properties of the above
mentioned representations.
Since that time there were a lot of publications devoted to the first Ostrogrdsky series (see, e.g.,  \cite{ABPT,ABPT2} and references and historical notes therein). Unfortunately the metric theory of the second Ostrogradsky is still not well developed.

The second Ostrogradsky  series converges rather quickly, giving a very good approximation of irrational numbers by rationals, which are partial sums of the above series.

The representation of a real number  $x$ in the form (\ref{??})
  is said to be the {\it $O^{2}$-expansion }of $x$ and will be denoted by $O^{2}(q_{1}(x),\ldots,q_{n}(x), ...)$.
For a given set of $k$ initial    symbols,  the (k+1)-th symbol of the  $O^{2}$-expansion can not take the values  $1,2,3,4,5,..., q_k(q_k+1) -1$, which makes these   $O^{2}$-symbols dependent.

  Let
\[
d_1 = q_1\quad \text{and}\quad d_{k+1}= q_{k+1}- q_k(q_k+1)+1 \quad \forall k \in N.
\]
 Then the previous series can be rewritten in the form
\begin{equation}\label{eq:bo2series}
x= \sum_k\frac{(-1)^{k+1}}{q_{k-1}(x)(q_{k-1}(x)+ 1) -1 + d_{k}(x)}=:\bar{O}^{2}(d_1(x),d_2(x),\dots, d_k(x),\dots).
\end{equation}
 Expression ~\eqref{eq:bo2series} is said to be the  $\bar{O}^{2}$-expansion (the second Ostrogradsky expansion with independent increments). The number  $d_k=d_k(x)$ is said to be the  $k$-th  $\bar{O}^{2}$-symbol of  ~$x$. In the  $\bar{O}^{2}$-expansion any symbol (independently of all previous ones) can  be chosen from the whole set of positive integers.

The main aims of the present paper are:

1) to develop  ergodic, metric and dimensional theories of the $\bar{O}^{2}$ - expansion for real numbers and compare these theories with the  corresponding theories for the continued fraction expansion and for the Ostrogradsky-Pierce expansion;

2) to study properties of the symbolic dynamical system generated by the one-sided shift transformation on the $\bar{O}^{2}$-expansion:
$$
 \forall ~ x = \bar{O}^{2}(d_1(x), d_2(x), \dots, d_n(x), \dots)~~ \in [0,1],
 $$
  $$T(x) = T(\bar{O}^{2}(d_1(x), d_2(x),\dots, d_n(x),\dots))= \bar{O}^{2}(d_2(x), d_3(x),\dots, d_n(x),\dots);
$$
3) to study  the distributions of random variables
 $$\eta =\bar{O}^{2} ( \eta_1, \eta_2, ..., \eta_n, ... ),
$$
whose $\bar{O}^{2}$-symbols $\eta_k$ are  independent identically distributed random variables taking the values $1$, $2$,~$\dots$, $m$,~$\dots$ with probabilities $p_{1}$, $p_{2}$,~$\dots$, $p_{m}$,~$\dots$
respectively, $ p_{m}\geq0, \quad
\sum\limits_{m=1}^\infty {p_{m}}=1.
$

\section{Properties of  cylinders of the $O^{2}$- and the $\bar{O}^{2}$-expansions }


Let $(c_{1},c_{2},\ldots,c_{n})$ be a given set of natural numbers.
 The set $$\Delta_{c_{1}c_{2}\ldots
c_{n}}^{O^{2}}=\{x:x=O^{2}(q_{1},q_{2},\ldots,q_{n},\ldots),q_{i}=c_{i},i=\overline{1,n},q_{n+j}\in
N\}$$ is said to be the cylinder of rank $n$ with the base $c_{1}c_{2}\ldots c_{n}$

Let us mention some properties of the $O^{2}$-cylinders.
\begin{enumerate}
\item[1.] $ \Delta_{c_{1}\ldots c_{n}}^{O^{2}}=\bigcup\limits_{i=c_{n}(c_{n}+1)}^{\infty}\Delta_{c_{1}\ldots c_{n}i}^{O^{2}}.$


\item[2.] $\inf\Delta_{c_{1}\ldots 2m-1}^{O^{2}}=\sum\limits_{k=1}^{2m-1}\frac{(-1)^{k-1}}{c_{k}}-\frac{1}{c_{2m-1}(c_{2m-1}+1)}=O^{2}(c_{1}, \ldots, c_{2m-1},c_{2m-1}(c_{2m-1}+1))=$
    $O^{2}(c_{1}, \ldots, c_{2m-2},c_{2m-1}+1)\in \Delta_{c_{1}\ldots c_{2m-1}}^{O^{2}};$
    \\$\sup\Delta_{c_{1}\ldots 2m-1}^{O^{2}}=\sum\limits_{k=1}^{2m-1}\frac{(-1)^{k-1}}{c_{k}}=O^{2}(c_{1}, \ldots, c_{2m-1})\in \Delta_{c_{1}\ldots c_{2m-1}}^{O^{2}};$
    \\$ \inf\Delta_{c_{1}\ldots 2m}^{O^{2}}=\sum\limits_{k=1}^{2m}\frac{(-1)^{k-1}}{c_{k}}=O^{2}(c_{1}, \ldots, c_{2m})\in \Delta_{c_{1}\ldots c_{2m}}^{O^{2}};$
    \\$\sup\Delta_{c_{1}\ldots 2m}^{O^{2}}=\sum\limits_{k=1}^{2m}\frac{(-1)^{k-1}}{c_{k}}+\frac{1}{c_{2m}(c_{2m}+1)}=O^{2}(c_{1}, \ldots, c_{2m},c_{2m}(c_{2m}+1))=$
    $O^{2}(c_{1},\ldots,c_{2m-1},c_{2m}+1)\in \Delta_{c_{1}\ldots c_{2m}}^{O^{2}}.$
\item[3.] $\sup \Delta^{O^{2}}_{c_{1}\ldots c_{2m-1}i}=\inf \Delta^{O^{2}}_{c_{1}\ldots c_{2m-1}(i+1)};$
    \\$\inf \Delta^{O^{2}}_{c_{1}\ldots c_{2m}i}=\sup \Delta^{O^{2}}_{c_{1}\ldots c_{2m}(i+1)}.$
\end{enumerate}

\begin{lemma}
The cylinder  $\Delta_{c_{1}\ldots c_{n}}^{O_{2}}$ is a closed interval $[a,b],$ where
$$a=\left\{ \begin{array}{lll}
O_{2}(c_{1},\ldots,c_{n},c_{n}(c_{n}+1)), & \mbox{if  n is odd} &, \\
O_{2}(c_{1},\ldots,c_{n}), & \mbox{if n is even} &;
\end{array} \right. $$
$$b=\left\{ \begin{array}{lll}
O_{2}(c_{1},\ldots,c_{n}), & \mbox{if n is odd} &, \\
O_{2}(c_{1},\ldots,c_{n}(c_{n}+1)), & \mbox{if n is even} &.
\end{array} \right. $$
\end{lemma}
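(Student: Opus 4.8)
The plan is to establish the two inclusions $\Delta^{O^{2}}_{c_{1}\ldots c_{n}}\subseteq[a,b]$ and $[a,b]\subseteq\Delta^{O^{2}}_{c_{1}\ldots c_{n}}$ separately, having first identified $a$ and $b$ with $\inf\Delta^{O^{2}}_{c_{1}\ldots c_{n}}$ and $\sup\Delta^{O^{2}}_{c_{1}\ldots c_{n}}$ as computed in property~2 above. Note that the rôles of $\inf$ and $\sup$ are interchanged according to the parity of $n$, owing to the sign $(-1)^{n-1}$ in front of the $n$-th term; for $n$ odd one gets $a=O^{2}(c_{1},\ldots,c_{n},c_{n}(c_{n}+1))$, $b=O^{2}(c_{1},\ldots,c_{n})$, and for $n$ even the two are swapped, which is exactly the statement. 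Throughout write $S_{n}:=\sum_{k=1}^{n}\frac{(-1)^{k-1}}{c_{k}}$ for the partial sum fixed by the base, so that $|b-a|=\frac{1}{c_{n}(c_{n}+1)}$.

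For the inclusion $\Delta^{O^{2}}_{c_{1}\ldots c_{n}}\subseteq[a,b]$ I would take an arbitrary $x=O^{2}(c_{1},\ldots,c_{n},q_{n+1},q_{n+2},\ldots)$ in the cylinder and split it as $x=S_{n}+r$ with tail $r=\sum_{k>n}\frac{(-1)^{k-1}}{q_{k}}$. Since the admissibility condition gives $q_{k+1}\ge q_{k}(q_{k}+1)>q_{k}$, the numbers $1/q_{k}$ strictly decrease to $0$, so $r$ is a Leibniz-type alternating series: its sign is that of its first term $\frac{(-1)^{n}}{q_{n+1}}$, i.e. $(-1)^{n}$, and $0\le|r|\le\frac{1}{q_{n+1}}\le\frac{1}{c_{n}(c_{n}+1)}$. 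Hence $S_{n}\le x\le S_{n}+\frac{1}{c_{n}(c_{n}+1)}$ when $n$ is even and $S_{n}-\frac{1}{c_{n}(c_{n}+1)}\le x\le S_{n}$ when $n$ is odd, i.e. $x\in[a,b]$. (The boundary cases $q_{n+1}=c_{n}(c_{n}+1)$ with terminating tail give the endpoints, in agreement with property~2.)

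For the reverse inclusion $[a,b]\subseteq\Delta^{O^{2}}_{c_{1}\ldots c_{n}}$ it suffices to realize every admissible tail value, i.e. to show that each $t\in\bigl(0,\frac{1}{c_{n}(c_{n}+1)}\bigr]$ equals $\frac{1}{q_{n+1}}-\frac{1}{q_{n+2}}+\cdots$ for some sequence with $q_{n+1}\ge c_{n}(c_{n}+1)$ and $q_{k+1}\ge q_{k}(q_{k}+1)$. I would run the greedy (Ostrogradsky--II) algorithm on $t$: set $q_{n+1}=\lfloor 1/t\rfloor$, $t_{1}=\frac{1}{q_{n+1}}-t\ge 0$, and iterate. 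Since $t\le\frac{1}{c_{n}(c_{n}+1)}$ we get $q_{n+1}\ge c_{n}(c_{n}+1)$, and since $q_{n+1}=\lfloor 1/t\rfloor$ forces $t>\frac{1}{q_{n+1}+1}$ we get $0\le t_{1}<\frac{1}{q_{n+1}(q_{n+1}+1)}$, whence the next digit $q_{n+2}=\lfloor 1/t_{1}\rfloor\ge q_{n+1}(q_{n+1}+1)$: the admissibility condition propagates. Whether the process terminates (rational $t$) or not, it exhibits $t$ as the required tail, so for $y\in(a,b)$ one has $y=S_{n}\pm t\in\Delta^{O^{2}}_{c_{1}\ldots c_{n}}$ with the sign dictated by parity. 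The endpoint $O^{2}(c_{1},\ldots,c_{n},c_{n}(c_{n}+1))$ lies in the cylinder by inspection, and $O^{2}(c_{1},\ldots,c_{n})$ lies in it by the convention on rational endpoints already used in property~2; combining, $\Delta^{O^{2}}_{c_{1}\ldots c_{n}}=[a,b]$.

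An alternative route to the reverse inclusion uses only properties~1 and~3: by property~1 the cylinder is the union of the rank-$(n+1)$ subcylinders $\Delta^{O^{2}}_{c_{1}\ldots c_{n}i}$, $i\ge c_{n}(c_{n}+1)$, and by property~3 consecutive ones are adjacent, their infima and suprema (computed in property~2) chaining together and accumulating precisely at $O^{2}(c_{1},\ldots,c_{n})$; an induction on $n$ then upgrades this to a genuine interval. I expect the one delicate point, in either route, to be exactly this reverse inclusion: verifying that the greedy digits stay admissible (and that the rational endpoint $O^{2}(c_{1},\ldots,c_{n})$ is not lost when passing to the closed interval). Everything else is the routine alternating-series estimate together with the parity case split.
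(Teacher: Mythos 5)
Your proof is correct and follows essentially the same route as the paper: the trivial inclusion, plus, for the reverse inclusion, expanding the offset from the appropriate endpoint by the greedy (Ostrogradsky--Remez) algorithm and checking via $\frac{1}{q_{n+1}+1}\le t<\frac{1}{c_n(c_n+1)}$ that the first new digit satisfies $q_{n+1}\ge c_n(c_n+1)$, so the concatenated expansion is admissible. Your write-up is merely more explicit than the paper's (the Leibniz estimate for the easy inclusion, the even/odd parity bookkeeping, and the propagation of the admissibility condition along the greedy recursion), but the substance is the same.
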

\begin{proof}
It is clear that  $\Delta_{c_{1}\ldots c_{n}}^{O_{2}}\subset [a,b].$ Let us prove that $[a,b]\subset\Delta_{c_{1}\ldots c_{n}}^{O_{2}}. $
Since $a,b\in \Delta_{c_{1}\ldots c_{n}}^{O_{2}},$ it is enough to show that any $x\in(a,b)$belongs to  $\Delta_{c_{1}\ldots c_{n}}^{O_{2}}.$

If $a<x<b,$ then $$x=a+x_{1},~\mbox{where}~a<x_{1}<b-a=\frac{1}{c_{n}(c_{n}+1)}.$$ The Ostrogradsky-Remez theorem states that  $x_{1}$ can be decomposed into the second Ostrogradsky series by using the following algorithm:
$$\left\{ \begin{array}{l}
 1=q_{1}x+\beta_{1}~~~ (0 \leq \beta_{1} <x), \\
 q_{1}=q_{2}\beta_{1}+\beta_{2}~~~ (0 \leq \beta_{2} < \beta_{1}), \\
 q_{2}q_{1}=q_{3}\beta_{2}+\beta_{3}~~~ (0 \leq \beta_{3} < \beta_{2}), \\
 .............................. \\
 q_{k}...q_{2}q_{1}=q_{k+1}\beta_{k}+\beta_{k+1}~~~ (0 \leq  \beta_{k+1}
< \beta_{k}), \\
 ..............................
\end{array} \right.$$ i.e.,

$$x_{1}=\frac{1}{q_{1}'}-\frac{1}{q_{2}'}+\frac{1}{q_{3}'}-\ldots+(-1)^{k-1}\frac{1}{q_{k}'}+\ldots . $$
Let us show that  $q_{1}'\geq c_{n}(c_{n}+1).$ Since
$$\frac{1}{q_{1}'+1}\leq \frac{1}{q_{1}'}-\frac{1}{q_{2}'}\leq x_{1}\leq\frac{1}{q_{1}'}, $$ we get
$$\frac{1}{q_{1}'+1}\leq x_{1}<\frac{1}{c_{n}(c_{n}+1)},$$
and hence, $c_{n}(c_{n}+1)<q_{1}'+1$ and $c_{n}(c_{n}+1)\leq q_{1}'.$

So, the number $x$ has the following $O_{2}$-expansion:
$x=O_{2}(c_{1},\ldots,c_{n},q_{1}',q_{2}',\ldots),$
and, therefore, $x\in \Delta_{c_{1}\ldots c_{n}}^{O_{2}}, $ which proves the lemma.
\end{proof}

\begin{corollary}For the length of the cylinder of rank $n$ the following relations hold: $$ |\Delta_{c_{1}\ldots \ldots
c_{n}}^{O^{2}}|=\frac{1}{c_{n}(c_{n}+1)}\rightarrow
0~~(n\rightarrow\infty);$$
\end{corollary}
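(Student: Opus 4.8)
The plan is to read off the length directly from the Lemma, which identifies the cylinder $\Delta_{c_1\ldots c_n}^{O^2}$ with the closed interval $[a,b]$. By the Lemma, $b-a$ equals the difference between $O^2(c_1,\ldots,c_n)$ and $O^2(c_1,\ldots,c_n,c_n(c_n+1))$ (with the roles of endpoints swapped according to the parity of $n$, but the difference is the same in absolute value). Since $O^2(c_1,\ldots,c_n,c_n(c_n+1))$ differs from $O^2(c_1,\ldots,c_n)$ only by the single extra term $\pm\frac{1}{c_n(c_n+1)}$ of the series \eqref{??}, we get $|b-a|=\frac{1}{c_n(c_n+1)}$ immediately.

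Concretely, I would split into the two parity cases. If $n$ is odd, then $a=O^2(c_1,\ldots,c_n,c_n(c_n+1))=\sum_{k=1}^{n}\frac{(-1)^{k-1}}{c_k}-\frac{1}{c_n(c_n+1)}$ and $b=O^2(c_1,\ldots,c_n)=\sum_{k=1}^{n}\frac{(-1)^{k-1}}{c_k}$, so $b-a=\frac{1}{c_n(c_n+1)}$. If $n$ is even, then $a=\sum_{k=1}^{n}\frac{(-1)^{k-1}}{c_k}$ and $b=\sum_{k=1}^{n}\frac{(-1)^{k-1}}{c_k}+\frac{1}{c_n(c_n+1)}$, giving the same value. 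These computations are exactly the ones recorded in property~2 of the $O^2$-cylinders listed above, so no new work is needed.

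Finally, to conclude $|\Delta_{c_1\ldots c_n}^{O^2}|=\frac{1}{c_n(c_n+1)}\to 0$ as $n\to\infty$, I note that the $O^2$-symbols satisfy $c_{n+1}\ge c_n(c_n+1)$, so the sequence $(c_n)$ is strictly increasing in $\mathbb{N}$; hence $c_n\to\infty$ and therefore $c_n(c_n+1)\to\infty$. Strictly speaking, for a fixed admissible sequence the length of the rank-$n$ cylinder containing a given irrational $x$ tends to $0$; since $c_n\ge n$ for such a sequence, one even has the uniform bound $|\Delta_{c_1\ldots c_n}^{O^2}|\le\frac{1}{n(n+1)}$.

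There is essentially no obstacle here: the corollary is an immediate consequence of the Lemma together with the endpoint formulas in property~2, and the only thing to be careful about is matching the endpoint labels to the parity of $n$ so that the subtraction is done in the correct order.
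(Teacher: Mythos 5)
Your proof is correct and follows the same route the paper intends: the length is read off directly from the Lemma's endpoint formulas (equivalently property~2), giving $|b-a|=\frac{1}{c_n(c_n+1)}$ in both parity cases, and the convergence to $0$ follows since $c_{n+1}\geq c_n(c_n+1)>c_n$ forces $c_n\to\infty$. No issues.
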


\begin{corollary}
The length of a cylinder depends only on the last digit of the base: $$|\Delta_{c_{1}\ldots
c_{n}i}^{O^{2}}|=\frac{1}{i(i+1)}=|\Delta_{s_{1}\ldots s_{m}i}^{O^{2}}|,~~\forall i\geq \max\{c_{n}(c_{n}+1), s_{m}(s_{m}+1)\}.$$
\end{corollary}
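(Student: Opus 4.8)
The plan is to read off the length of $\Delta^{O^2}_{c_1\ldots c_n i}$ directly from the preceding lemma. By that lemma, $\Delta^{O^2}_{c_1\ldots c_n i}$ is a closed interval whose length equals $\tfrac{1}{i(i+1)}$, the expression $b-a$ coming from the term $\pm\tfrac{1}{i(i+1)}$ attached to the last digit $i$ (this is precisely the content of the first corollary applied to the base $c_1\ldots c_n i$, which has rank $n+1$ and last digit $i$). The same computation applied to the base $s_1\ldots s_m i$ gives length $\tfrac{1}{i(i+1)}$ as well. Hence the two lengths coincide, and the only thing to check is that the cylinders in question are nonempty, i.e. that $i$ is an admissible $(n+1)$-st (resp. $(m+1)$-st) $O^2$-digit after the respective bases.

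So the first step is to recall the admissibility constraint: from the remark in the introduction, after initial symbols ending in $c_n$ the next symbol $q_{n+1}$ must satisfy $q_{n+1}\ge c_n(c_n+1)$; equivalently $\Delta^{O^2}_{c_1\ldots c_n i}\ne\varnothing$ iff $i\ge c_n(c_n+1)$. The second step is to observe that the hypothesis $i\ge\max\{c_n(c_n+1),\,s_m(s_m+1)\}$ guarantees both $i\ge c_n(c_n+1)$ and $i\ge s_m(s_m+1)$, so both cylinders are genuine (nondegenerate) closed intervals. The third step is then the one-line invocation of the lemma / first corollary to conclude $|\Delta^{O^2}_{c_1\ldots c_n i}|=\tfrac{1}{i(i+1)}=|\Delta^{O^2}_{s_1\ldots s_m i}|$.

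There is essentially no obstacle here; the statement is a bookkeeping consequence of the lemma, whose real content is that the cylinder is a full interval with no gaps. If anything, the only subtlety worth a sentence is making sure the endpoint formulas from the lemma are applied with the correct parity of the rank $n+1$ (odd vs.\ even), but in both parity cases the width $b-a$ is the same $\tfrac{1}{i(i+1)}$ because it always arises from the single extra term $\tfrac{1}{i(i+1)}$ corresponding to the final digit $i$. I would therefore present the argument as: reduce to nonemptiness via the admissibility bound, then quote the lemma.
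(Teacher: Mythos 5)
Your argument is correct and is exactly the route the paper intends: the paper states this corollary without proof as an immediate consequence of the lemma (cylinders are closed intervals) and the first corollary (a rank-$(n+1)$ cylinder with last digit $i$ has length $\tfrac{1}{i(i+1)}$), which is precisely your one-line invocation. Your extra care about admissibility ($i\ge c_n(c_n+1)$, $i\ge s_m(s_m+1)$) and about the parity of the rank is a reasonable bookkeeping check but does not change the argument.
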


Any cylinder of the $O^{2}$-expansion can be rewritten in terms of  the $\bar{O}^{2}$-expansion:
$$\Delta_{c_{1}\ldots c_{n}}^{O^{2}}\equiv \bar{\Delta}_{a_{1}\ldots
\ldots a_{n}}^{O^{2}},$$
 where $a_{1}=c_{1},~a_{k}=c_{k}+1-c_{k-1}(c_{k-1}+1),~1<k<n.$

Properties of   $O^{2}$-cylinders generate properties of the $\bar{O}^{2}$-cylinders. Let us mention only some of these properties.


\begin{lemma}\label{lemma relation of cyl}
$$\frac{|\Delta_{a_{1}\ldots a_{n}i}^{\bar{O^{2}}}|}{|\Delta_{a_{1}\ldots a_{n}}^{\bar{O^{2}}}|}=
\frac{|\Delta_{c_{1}\ldots c_{n}(c_{n}(c_{n}+1)-1+i)}^{O^{2}}|}{|\Delta_{c_{1}\ldots c_{n}}^{O^{2}}|}\leq \frac{1}{c_{n}(c_{n}+1)+i}<\frac{1}{c_{n}^{2}}.$$
\end{lemma}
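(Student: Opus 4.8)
The plan is to translate everything into the language of $O^2$-cylinders, where the geometry is already pinned down by the earlier lemma and its corollaries. First I would use the identification $\Delta_{a_1\ldots a_n}^{\bar O^2}\equiv\Delta_{c_1\ldots c_n}^{O^2}$ together with the substitution rule $a_k=c_k+1-c_{k-1}(c_{k-1}+1)$ applied at level $n+1$: the child cylinder $\Delta_{a_1\ldots a_n i}^{\bar O^2}$ corresponds to the $O^2$-cylinder whose last digit is $c_{n+1}=c_n(c_n+1)-1+i$. This immediately gives the first equality in the statement, $|\Delta_{a_1\ldots a_n i}^{\bar O^2}|/|\Delta_{a_1\ldots a_n}^{\bar O^2}| = |\Delta_{c_1\ldots c_n(c_n(c_n+1)-1+i)}^{O^2}|/|\Delta_{c_1\ldots c_n}^{O^2}|$.

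Next I would evaluate both cylinder lengths using Corollary 1 (the length of a rank-$m$ $O^2$-cylinder with last base digit $c$ equals $\tfrac{1}{c(c+1)}$). The denominator is $|\Delta_{c_1\ldots c_n}^{O^2}| = \tfrac{1}{c_n(c_n+1)}$, and the numerator is $|\Delta_{c_1\ldots c_{n+1}}^{O^2}| = \tfrac{1}{c_{n+1}(c_{n+1}+1)}$ with $c_{n+1}=c_n(c_n+1)-1+i$. Writing $N:=c_n(c_n+1)$ for brevity, the ratio becomes
\[
\frac{c_n(c_n+1)}{(N-1+i)(N+i)} = \frac{N}{(N-1+i)(N+i)}.
\]
Since $i\geq 1$ we have $N-1+i\geq N$, hence $(N-1+i)(N+i)\geq N(N+i)$, and therefore the ratio is at most $\tfrac{N}{N(N+i)}=\tfrac{1}{N+i}=\tfrac{1}{c_n(c_n+1)+i}$, which is the middle bound. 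The final inequality $\tfrac{1}{c_n(c_n+1)+i}<\tfrac{1}{c_n^2}$ is immediate from $c_n(c_n+1)+i = c_n^2+c_n+i > c_n^2$ (valid since $c_n\geq 1$ and $i\geq 1$, indeed already since $c_n\geq 1$).

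There is essentially no serious obstacle here; the only thing to be careful about is the bookkeeping of indices when passing from the $\bar O^2$-digit $i$ in position $n+1$ to the corresponding $O^2$-digit $c_{n+1}$, and to make sure the admissibility condition $c_{n+1}\geq c_n(c_n+1)$ is respected — but that is exactly the statement that $i\geq 1$, which is automatic since $\bar O^2$-symbols are arbitrary positive integers. One should also note that Corollary 2 guarantees the cylinder length depends only on the last base digit, so the formula $|\Delta_{c_1\ldots c_m}^{O^2}|=\tfrac{1}{c_m(c_m+1)}$ applies uniformly regardless of the prefix $c_1,\ldots,c_{m-1}$, which is what makes the ratio collapse to the clean expression above.
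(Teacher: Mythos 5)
Your proof is correct and follows essentially the same route as the paper: identify the $\bar{O}^2$-cylinders with $O^2$-cylinders via $c_{n+1}=c_n(c_n+1)-1+i$, compute the ratio $\frac{c_n(c_n+1)}{(c_n(c_n+1)+i-1)(c_n(c_n+1)+i)}$ from the length formula, and bound it using $i\geq 1$ and $c_n(c_n+1)+i>c_n^{2}$. The only cosmetic difference is that the paper factors the ratio as $\frac{1}{\left(1+\frac{i-1}{c_n(c_n+1)}\right)\left(c_n(c_n+1)+i\right)}$ before dropping the first factor, while you argue $(N-1+i)(N+i)\geq N(N+i)$ directly — the same estimate in different clothing.
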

\begin{proof}
$$\frac{|{\Delta}_{a_{1}\ldots a_{n}i}^{\bar{O^{2}}}|}{|{\Delta}_{a_{1}\ldots a_{n}}^{\bar{O^{2}}}|}=
\frac{|\Delta_{c_{1}\ldots c_{n}(c_{n}(c_{n}+1)-1+i)}^{O^{2}}|}{|\Delta_{c_{1}\ldots c_{n}}^{O^{2}}|}=\frac{c_{n}(c_{n}+1)}{(c_{n}(c_{n}+1)+i-1)(c_{n}(c_{n}+1)+i)}=$$
$$=\frac{1}{\left(1+\frac{i-1}{c_{n}(c_{n}+1)}\right)
\left(c_{n}(c_{n}+1)+i\right)}\leq\frac{1}{c_{n}(c_{n}+1)+i}<\frac{1}{c_{n}^{2}}.$$
\end{proof}

\begin{remark}
For the continued fractions
$$\frac{1}{3i^{2}}\leq \frac{|\Delta_{a_{1}\ldots a_{n}i}^{c.f.}|}{|\Delta_{a_{1}\ldots a_{n}}^{c.f.}|}\leq \frac{1}{i^{2}},$$
independently of the  previous digits.

\end{remark}

\begin{lemma} \label{Lemma ocinka vidnoshennya cylindriv}
For any sequence of positive integers $a_1, a_2,..., a_k$ the following inequalities hold:
$$\frac{|{\Delta}_{a_{1}a_{2}\ldots a_{k}
j}^{\bar{O}^{2}}|}{|{\Delta}_{a_{1}a_{2}\ldots
a_{k}}^{\bar{O}^{2}}|}<\frac{1}{2^{2^{k-1}}}, ~\forall j\in N; $$
$$\frac{|\Delta_{b a_{2}\ldots a_{k}j}^{\bar{O}^{2}}|}
{|\Delta_{b a_{2}\ldots a_{k}}^{\bar{O}^{2}}|}<\frac{1}{b^{2^{k}}},~\forall b\in N, ~\forall j\in N.$$
\end{lemma}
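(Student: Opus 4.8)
The plan is to derive both estimates from Lemma~\ref{lemma relation of cyl} by controlling how fast the $O^{2}$-digits associated with a $\bar O^{2}$-base must grow. Recall the translation $\Delta_{a_{1}\ldots a_{n}}^{\bar O^{2}}\equiv\Delta_{c_{1}\ldots c_{n}}^{O^{2}}$ with $c_{1}=a_{1}$ and $c_{k}=a_{k}-1+c_{k-1}(c_{k-1}+1)$ for $k\ge2$. Since every $\bar O^{2}$-symbol is a positive integer, $a_{k}\ge1$, and hence
\begin{equation*}
c_{k}\ \ge\ c_{k-1}(c_{k-1}+1)\ >\ c_{k-1}^{\,2}\qquad(k\ge2),
\end{equation*}
so the digits $c_{k}$ grow at least doubly exponentially no matter what the $a_{k}$ are. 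Applying Lemma~\ref{lemma relation of cyl} with the last digit $i=j$ then gives, for a rank-$k$ base ending in $j$,
\begin{equation*}
\frac{|\Delta_{a_{1}\ldots a_{k}j}^{\bar O^{2}}|}{|\Delta_{a_{1}\ldots a_{k}}^{\bar O^{2}}|}\ \le\ \frac{1}{c_{k}(c_{k}+1)+j}\ =\ \frac{1}{c_{k}^{\,2}+c_{k}+j},
\end{equation*}
so everything reduces to lower bounds on $c_{k}$ (and on $c_{k}^{\,2}+c_{k}+j$ for $k=1$).

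For the first inequality I would prove by induction that $c_{k}\ge 2^{2^{k-2}}$ for all $k\ge2$: the base case is $c_{2}\ge c_{1}(c_{1}+1)\ge 2=2^{2^{0}}$, and the inductive step is $c_{k+1}>c_{k}^{\,2}\ge\bigl(2^{2^{k-2}}\bigr)^{2}=2^{2^{k-1}}=2^{2^{(k+1)-2}}$. Consequently, for $k\ge2$, $c_{k}^{\,2}+c_{k}+j>c_{k}^{\,2}\ge 2^{2^{k-1}}$, giving the ratio $<2^{-2^{k-1}}$; for $k=1$ the quantity $c_{1}^{\,2}+c_{1}+j\ge1+1+1=3>2=2^{2^{0}}$, so the ratio is $<\tfrac12$ there as well. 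For the second inequality the first $\bar O^{2}$-symbol is $b$, so $c_{1}=b$, and the same induction now yields $c_{k}\ge b^{2^{k-1}}$ for every $k\ge1$ (here $c_{1}=b=b^{2^{0}}$, $c_{2}\ge b(b+1)\ge b^{2}$, and $c_{k+1}>c_{k}^{\,2}\ge\bigl(b^{2^{k-1}}\bigr)^{2}=b^{2^{k}}$); hence $c_{k}^{\,2}\ge b^{2^{k}}$ and the ratio is $<b^{-2^{k}}$.

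I do not expect a genuine obstacle: the entire argument hinges on the single observation that the $\bar O^{2}\to O^{2}$ conversion forces $c_{k}\ge c_{k-1}(c_{k-1}+1)$, after which Lemma~\ref{lemma relation of cyl} together with two one-line inductions closes the proof. The only points demanding a little care are preserving strict inequalities (which is where the summand $j\ge1$ is used) and isolating the case $k=1$ in the first estimate, where $c_{1}$ may equal $1$, so that the required bound must come from $j$ rather than from $c_{1}$ itself.
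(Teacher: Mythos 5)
Your proof is correct and follows essentially the same route as the paper: both reduce the ratio via Lemma~\ref{lemma relation of cyl} to a bound of the form $1/(c_k(c_k+1)+j)<1/c_k^2$ and then exploit the recursion $c_k\ge c_{k-1}(c_{k-1}+1)>c_{k-1}^2$ to get the lower bounds $c_k\ge 2^{2^{k-2}}$ and $c_k\ge b^{2^{k-1}}$. Your write-up is in fact slightly more careful than the paper's (explicit inductions, the $k=1$ case, and strictness of the inequalities), but there is no substantive difference in method.
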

\begin{proof}
From lemma \ref{lemma relation of cyl} and from the fact that $c_{k}>c_{k-1}^{2}>c_{k-2}^{2^{2}}>c_{k-3}^{2^{3}}>\ldots>c_{k-(k-2)}^{2^{k-2}}>c_{2}^{2^{k-2}}>2^{2^{k-2}}$ it follows that
$$\frac{|{\Delta}_{a_{1}a_{2}\ldots a_{k}
j}^{\bar{O}^{2}}|}{|{\Delta}_{a_{1}a_{2}\ldots
a_{k}}^{\bar{O}^{2}}|}<\frac{1}{c_{k}^{2}}<\frac{1}{(2^{2^{k-2}})^{2}}=\frac{1}{2^{2^{k-1}}}.$$

Similarly, taking into account the fact that $c_{k}>c_{2}^{k-2}$ and  $c_{2}\geq b(b+1)>b^{2},$ we get
$$\frac{|\Delta_{b a_{2}\ldots a_{k}j}^{\bar{O}^{2}}|}
{|\Delta_{b a_{2}\ldots a_{k}}^{\bar{O}^{2}}|}<\frac{1}{c_{k}^{2}}<\frac{1}{c_{2}^{k-1}}<\frac{1}{(b^{2^{k-1}})^{2}}=\frac{1}{b^{^{2^{k}}}},~\forall b\in N.$$
\end{proof}

\section{Properties of  symbolic dynamics related to the $\bar{O}^{2}$-expansion}

Let us consider the dynamical system generated by the one-sided shift transformation  $T$ on the  $\bar{O}^2$-expansion:
$$
 \forall ~ x = \bar{O}^2(d_1(x), d_2(x), \dots, d_n(x), \dots)~~ \in [0,1],
 $$
  $$T(x) =  \bar{O}^2(d_2(x), d_3(x),\dots, d_n(x),\dots).
$$

 Recall that a set  $A$ is said to be invariant w.r.t. a measurable transformation  $T,$ if $A=T^{-1}A.$
  A measure $\mu$ is said to be ergodic w.r.t. a transformation  $T,$ if any invariant set  $A\in\mathfrak{B}$ is either of full or of zero measure $\mu$. A measure $\mu$ is said to be invariant w.r.t. a transformation  $T,$ if for any set  $E\in\mathfrak{B}$ one has
  $\mu(T^{-1}E)=\mu(E)$.

 It is well known that the development of  metric and ergodic theories of some expansion for reals can be essentially simplified if one can find a measure which is invariant and ergodic w.r.t. one-sided shift transformation on the corresponding expansion and absolutely continuous w.r.t. Lebesgue measure.  For instance, having the Gauss measure (i.e., the measure with the density $f(x)= \frac{1}{\ln 2} \frac{1}{1+x}$ on the unit interval) as invariant and ergodic measure w.r.t. the transformation $T(x)=\frac{1}{x} (mod 1)$, one can easily derive main metric and ergodic properties of the continued fraction (c.f.)-expansion. In this section we shall try to find  the $\bar{O}^{2}$-analogue of the Gauss measure.

Let  $N_{i}(x,k)$ be the number of the symbols $ "i" $ among the first $k$ symbols of  the $\bar{O}^{2}$-expansion of  $x$.
If the limit $\lim\limits_{k \to \infty} \frac{N_{i}(x,k)}{k}$  exists, then it is said to be the asymptotic frequency of the digit (symbol) "i" in the $\bar{O}^{2}$-expansion of the real number $x$. We shall use the notation  $\nu_i (x, \bar{O}^{2})$, or $\nu_i (x)$ for cases where no confusion can arise.

\begin{theorem}\label{main theorem o2 exp} In the $\bar{O}^{2}$- expansion of  almost all real numbers from the unit interval any digit $i$ from the  alphabet $A= \mathbb{N} $ appears  only finitely many times, i.e.,
for Lebesgue almost all real numbers $x \in [0,1]$ and for any  symbol  $i \in \mathbb{N}$ one has:
$$
\limsup_{n \to \infty} N_i(x,n) < +\infty.
$$
\end{theorem}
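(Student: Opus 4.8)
The plan is to show, for a \emph{fixed} digit $i\in\mathbb{N}$, that the set of $x\in[0,1]$ for which the symbol $i$ appears infinitely often in the $\bar O^2$-expansion has Lebesgue measure zero; a countable union over $i\in\mathbb{N}$ then finishes the proof. For fixed $i$, let $E_k^{(i)}=\{x:\ d_k(x)=i\}$; this is a union of rank-$k$ cylinders, one sub-cylinder $\Delta^{\bar O^2}_{a_1\ldots a_{k-1}i}$ inside each rank-$(k-1)$ cylinder $\Delta^{\bar O^2}_{a_1\ldots a_{k-1}}$. The event "$i$ appears infinitely often" is $\limsup_k E_k^{(i)}$, so by the Borel--Cantelli lemma it suffices to prove $\sum_k |E_k^{(i)}|<\infty$. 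The key is that Lemma~\ref{lemma relation of cyl} gives, for each base $(a_1,\ldots,a_{k-1})$,
$$
\frac{|\Delta^{\bar O^2}_{a_1\ldots a_{k-1}i}|}{|\Delta^{\bar O^2}_{a_1\ldots a_{k-1}}|}\ \le\ \frac{1}{c_{k-1}(c_{k-1}+1)+i}\ <\ \frac{1}{c_{k-1}^2},
$$
and Lemma~\ref{Lemma ocinka vidnoshennya cylindriv} makes this uniform: for every base of length $k-1$ the ratio is $<2^{-2^{k-2}}$. Summing over the disjoint rank-$(k-1)$ cylinders, which together fill $[0,1]$ up to a null set, yields $|E_k^{(i)}|<2^{-2^{k-2}}$, independently of $i$.

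Then $\sum_{k\ge 1}|E_k^{(i)}|\le \sum_{k\ge 1}2^{-2^{k-2}}<\infty$ (a doubly-exponentially decaying series), so by Borel--Cantelli the set $L_i:=\limsup_k E_k^{(i)}$ is Lebesgue-null. Finally $\bigcup_{i\in\mathbb{N}}L_i$ is a countable union of null sets, hence null; for every $x$ outside this set and every $i$ one has $d_k(x)=i$ for only finitely many $k$, i.e. $N_i(x,n)$ is bounded in $n$, which is precisely $\limsup_{n\to\infty}N_i(x,n)<+\infty$.

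The routine points to be checked are: that $E_k^{(i)}$ is genuinely the disjoint union of the sub-cylinders $\Delta^{\bar O^2}_{a_1\ldots a_{k-1}i}$ over all admissible bases (immediate from the definition of the $\bar O^2$-symbols, where each digit ranges freely over $\mathbb{N}$), and that the rank-$(k-1)$ cylinders partition $[0,1]$ up to the countable set of rationals (which follows from Lemma~1 and the monotonicity/adjacency relations in properties 1--3 above, together with $|\Delta^{O^2}_{c_1\ldots c_{k-1}}|\to 0$). I do not expect a genuine obstacle here: the doubly-exponential decay in Lemma~\ref{Lemma ocinka vidnoshennya cylindriv} is far stronger than what Borel--Cantelli needs, and in fact makes the bound uniform in the digit $i$, which is exactly what lets the argument survive the final countable union over $i\in\mathbb{N}$. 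The only mild subtlety is bookkeeping with the index shift between rank $k$ (the digit we fix) and rank $k-1$ (the base over which we sum).
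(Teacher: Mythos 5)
Your proposal is correct and follows essentially the same route as the paper: decompose the bad event for a fixed digit $i$ into the sets $\{x:\,d_k(x)=i\}$, bound their measures by $2^{-2^{k-2}}$ uniformly in $i$ via Lemma~\ref{Lemma ocinka vidnoshennya cylindriv}, apply Borel--Cantelli, and finish with a countable intersection (union) over $i\in\mathbb{N}$. The only difference is cosmetic bookkeeping (the paper treats $k=1$ separately with $\lambda=\frac{1}{i(i+1)}\le\frac12$), which does not affect the argument.
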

\begin{proof} Let  $N_i(x)$ be the number of symbols $"i"$ in the  $\bar{O}^{2}$-expansion of the real number  $x$. We shall prove that the Lebesgue measure of the set $$A_{i}=\{x:~N_{i}(x)=\infty\}$$ is equal to zero for any  $i\in N.$

To this end let us consider the set $$\bar{\Delta}_{i}^{k}=\{x: x=\bar{O}^{2}(d_{1},\ldots,d_{k-1},i,d_{k+1},\ldots); d_j \in N, \forall j\not=k\},$$
of all real numbers whose  $\bar{O}^{2}$-expansion contains the digit $i$ at the   $k$-th position, i.e.,  $d_{k}(x)=i.$
\begin{lemma}
For any  $i \in N$ and  $k \in N$ one has
 $$\lambda\left(\bar{\Delta}_{i}^{1}\right)=\frac{1}{i(i+1)}\leq\frac{1}{2},~~~$$
$$ \lambda(\bar{\Delta}_{i}^{k})  \leq \frac{1}{2^{2^{k-2}}} ~\mbox{for}~k>1.$$
\end{lemma}
\begin{proof}
Since $\bar{\Delta}_{i}^{1}=\Delta_{i}^{\bar{O}^{2}}=\left[\frac{1}{i+1};\frac{1}{i}\right],$ we have $\lambda\left(\bar{\Delta}_{i}^{1}\right)
=\frac{1}{i(i+1)}\leq \frac{1}{2}.$

From the above mentioned properties of cylindrical sets and from the definition of the set  $\bar{\Delta}_{i}^{k}$ it follows that
$$\bar{\Delta}_{i}^{k}=\bigcup\limits_{a_{1}=1}^{\infty}\ldots\bigcup\limits_{a_{k-1}=1}^{\infty}\Delta^{\bar{O}^{2}}_{a_{1}\ldots a_{k-1}i} $$
and $$\frac{|\Delta^{\bar{O}^{2}}_{a_{1}\ldots a_{k-1}i}|}{|\Delta^{\bar{O}^{2}}_{a_{1}\ldots a_{k-1}}|} \leq \frac{1}{2^{2^{k-2}}},$$

Therefore $$\lambda\left(\bar{\Delta}_{i}^{k}\right)=\sum\limits_{a_{1}=1}^{\infty}\ldots \sum\limits_{a_{k-1}=1}^{\infty}|\Delta^{\bar{O}^{2}}_{a_{1}\ldots a_{k-1}i}|\leq \frac{1}{2^{2^{k-2}}}\left(\sum\limits_{a_{1}=1}^{\infty}\ldots \sum\limits_{a_{k-1}=1}^{\infty}|\Delta^{\bar{O}^{2}}_{a_{1}\ldots a_{k-1}}|\right)=\frac{1}{2^{2^{k-2}}}.$$
\end{proof}

It is clear that  $A_{i}$ is the upper limit of the sequence of the set $\{\bar\Delta_{i}^{k}\},$ i.e., $$A_{i}=\limsup_{k\rightarrow\infty}\bar{\Delta}_{i}^{k}=
\bigcap_{m=1}^{\infty}\left(\bigcup_{k=m}^{\infty} \bar{\Delta}_{i}^{k} \right).$$
Taking into account the Borel-Cantelli Lemma and the fact that  $$\sum\limits_{k=1}^{\infty} \lambda(\bar{\Delta}_{i}^{k}) \leq \sum\limits_{k=1}^{\infty} \frac{1}{2^{2^{k-2}}} < +\infty,$$ we deduce that  $$\lambda(A_i)=0, \forall i \in N.  $$
So, $$\lambda(\bar{A_i})=1, \forall i \in N.  $$

Let $$\bar{A} = \bigcap_{i=1}^{\infty}  \bar{A_i}.$$ Then  $\lambda(\bar{A})=1,$ which proves the theorem.
\end{proof}

\begin{corollary}\label{chastota i =0}
For Lebesgue almost all real numbers $x$ from the unit interval and $ \forall i\in N: \nu_{i}(x)=0.$
\end{corollary}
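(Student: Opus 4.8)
The plan is to obtain this as an immediate consequence of Theorem~\ref{main theorem o2 exp}. First I would recall what that theorem actually delivers: a Borel set $\bar A\subset[0,1]$ with $\lambda(\bar A)=1$ such that for every $x\in\bar A$ and every $i\in\mathbb N$ the symbol $i$ occurs only finitely many times in the $\bar O^{2}$-expansion of $x$. Equivalently, for such $x$ and $i$ the total count $N_i(x)=\lim_{n\to\infty}N_i(x,n)$ is a well-defined finite nonnegative integer (the non-decreasing integer sequence $n\mapsto N_i(x,n)$ eventually stabilizes).

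Next, fix $x\in\bar A$ and $i\in\mathbb N$. Since $0\le N_i(x,n)\le N_i(x)<+\infty$ for all $n$, the numerator in $\frac{N_i(x,n)}{n}$ is bounded while the denominator tends to infinity, so $\lim_{n\to\infty}\frac{N_i(x,n)}{n}=0$. In particular the limit defining the asymptotic frequency exists, and therefore $\nu_i(x)=\nu_i(x,\bar O^{2})=0$. As this holds for every $x\in\bar A$ and every $i\in\mathbb N$, and $\lambda(\bar A)=1$, the corollary follows.

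There is essentially no obstacle in this argument; no new measure estimate or Borel--Cantelli application beyond those already used in the proof of Theorem~\ref{main theorem o2 exp} is required. The only point worth stating explicitly is the passage from ``$i$ appears finitely often'' to ``the Ces\`aro-type frequency vanishes'': finiteness of the total count forces $N_i(x,\cdot)$ to be eventually constant, which is precisely what makes $N_i(x,n)/n\to 0$.
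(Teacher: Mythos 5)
Your argument is correct and is exactly the intended deduction: the paper states this as an immediate consequence of Theorem~\ref{main theorem o2 exp}, whose proof already produces the full-measure set $\bar A$ on which every digit occurs only finitely often, and the passage from boundedness of $N_i(x,n)$ to $N_i(x,n)/n\to 0$ is the same (tacit) step. Nothing is missing and no new estimate is needed.
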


\begin{corollary}
For any stochastic vector  $\overrightarrow{p}=(p_{1},p_{2},\ldots,p_{k},\ldots)$ the set  $$I_{\overrightarrow{p}}=\{x:~x=\bar{O}^{2}(d_{1}(x),\ldots,d_{k}(x),\ldots),~\nu_{i}(x)=p_{i}~~\forall i\in N\}$$
is of zero Lebesgue measure.
\end{corollary}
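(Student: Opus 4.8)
The plan is to obtain this as an immediate consequence of Corollary~\ref{chastota i =0}, which states that for Lebesgue almost every $x\in[0,1]$ one has $\nu_i(x)=0$ for every digit $i\in\mathbb N$; so essentially all the work has already been done in Theorem~\ref{main theorem o2 exp}. First I would exploit the only hypothesis on $\overrightarrow p=(p_1,p_2,\dots,p_k,\dots)$, namely that it is a stochastic vector: since $\sum_{i=1}^\infty p_i=1>0$, not all its coordinates can vanish, hence there exists an index $i_0$ with $p_{i_0}>0$.

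Next I would use the definition of $I_{\overrightarrow p}$: every $x\in I_{\overrightarrow p}$ satisfies, in particular, $\nu_{i_0}(x)=p_{i_0}>0$, so such an $x$ lies outside the set $\{x\in[0,1]:\nu_{i_0}(x)=0\}$. Consequently
\[
I_{\overrightarrow p}\subseteq[0,1]\setminus\{x\in[0,1]:\nu_{i_0}(x)=0\}.
\]
Finally, Corollary~\ref{chastota i =0} guarantees that $\{x\in[0,1]:\nu_{i_0}(x)=0\}$ has full Lebesgue measure, hence its complement is $\lambda$-null, and therefore so is the subset $I_{\overrightarrow p}$.

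There is no genuine obstacle in this argument; the only point worth isolating is the elementary observation that a probability distribution on the infinite alphabet $\mathbb N$ must assign strictly positive mass to at least one symbol, and this is flatly incompatible with the $\lambda$-generic regime established above, in which every symbol occurs only finitely often and hence has asymptotic frequency zero. It is precisely this phenomenon that distinguishes the $\bar O^{2}$-expansion from the continued fraction expansion, where the ergodic theorem applied to the Gauss map produces one fixed vector of digit frequencies valid for a full-measure set of reals.
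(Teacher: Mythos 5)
Your argument is correct and matches the paper's intended reasoning: the corollary is stated as an immediate consequence of Corollary~\ref{chastota i =0}, exactly via the observation that a stochastic vector has some $p_{i_0}>0$, so $I_{\overrightarrow{p}}$ lies in the complement of the full-measure set $\{x:\nu_{i_0}(x)=0\}$ (and, Lebesgue measure being complete, any subset of a null set is null).
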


\begin{theorem}
  There are no probability measures which are simultaneously invariant and ergodic w.r.t the one-sided shift transformation $T$ on the  $\bar{O}^2$-expansion, and  absolutely continuous w.r.t. Lebesgue measure.
\end{theorem}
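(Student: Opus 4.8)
The plan is to argue by contradiction, combining Birkhoff's pointwise ergodic theorem with the two corollaries already established (Corollary~\ref{chastota i =0} and the Corollary immediately preceding this theorem). Suppose $\mu$ is a probability measure on $[0,1]$ that is invariant and ergodic with respect to $T$ and absolutely continuous with respect to Lebesgue measure $\lambda$. For each $i\in N$ set $p_{i}:=\mu(\bar{\Delta}_{i}^{1})$, where $\bar{\Delta}_{i}^{1}=\Delta_{i}^{\bar{O}^{2}}=\left[\frac{1}{i+1},\frac{1}{i}\right]$ is the first-rank cylinder with digit $i$ at the first position. The cylinders $\{\bar{\Delta}_{i}^{1}\}_{i\in N}$ cover $[0,1]$ up to a countable set (the shared endpoints $\frac1i$), which is $\lambda$-null and hence, by absolute continuity, $\mu$-null; consequently $\sum_{i}p_{i}=1$, so $\overrightarrow{p}=(p_{1},p_{2},\dots)$ is a stochastic vector.

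Next I would apply Birkhoff's ergodic theorem to the indicator $\mathbf{1}_{\bar{\Delta}_{i}^{1}}$, which lies in $L^{1}(\mu)$ since $\mu$ is a probability measure. For an irrational $x=\bar{O}^{2}(d_{1}(x),d_{2}(x),\dots)$ one has $T^{k}x=\bar{O}^{2}(d_{k+1}(x),d_{k+2}(x),\dots)$, so $\mathbf{1}_{\bar{\Delta}_{i}^{1}}(T^{k}x)=1$ exactly when $d_{k+1}(x)=i$; therefore $\frac{1}{n}\sum_{k=0}^{n-1}\mathbf{1}_{\bar{\Delta}_{i}^{1}}(T^{k}x)=\frac{N_{i}(x,n)}{n}$. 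By invariance, ergodicity and Birkhoff's theorem, for $\mu$-almost every $x$ this ratio converges to $\int\mathbf{1}_{\bar{\Delta}_{i}^{1}}\,d\mu=p_{i}$. Thus, for each fixed $i\in N$, the asymptotic frequency $\nu_{i}(x)$ exists and equals $p_{i}$ for $\mu$-a.e.\ $x$; intersecting these full-measure sets over all $i\in N$ yields $\mu(I_{\overrightarrow{p}})=1$, where $I_{\overrightarrow{p}}=\{x:\nu_{i}(x)=p_{i}\ \forall i\in N\}$.

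Finally I would invoke the Corollary immediately preceding this theorem, which asserts that $\lambda(I_{\overrightarrow{p}})=0$ for every stochastic vector $\overrightarrow{p}$ (equivalently, by Corollary~\ref{chastota i =0}, $\lambda$-a.e.\ $x$ satisfies $\nu_{i}(x)=0$ for all $i$, while at least one $p_{i_{0}}>0$ since $\sum_{i}p_{i}=1$, so $I_{\overrightarrow{p}}\subseteq\{x:\nu_{i_{0}}(x)\neq 0\}$ is $\lambda$-null). Hence $\mu$ places full mass on a set of Lebesgue measure zero, contradicting $\mu\ll\lambda$. This contradiction proves the theorem.

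The argument is not deep once Birkhoff's theorem is in hand; the only points needing care are the measurability of $T$ and of the cylinder sets (so that the ergodic theorem truly applies), the identification of $\mathbf{1}_{\bar{\Delta}_{i}^{1}}\circ T^{k}$ with the event $\{d_{k+1}=i\}$, and the harmless bookkeeping of the countable set of rationals, which is negligible precisely because $\mu$ is assumed absolutely continuous. The real content is supplied by Corollary~\ref{chastota i =0}: the digit frequencies that any ergodic invariant measure is forced to produce are exactly the ones that Lebesgue-typical numbers never exhibit.
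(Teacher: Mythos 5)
Your proposal is correct and takes essentially the same route as the paper: a contradiction via Birkhoff's ergodic theorem, which forces some digit $i_0$ to have positive asymptotic frequency on a set of full $\mu$-measure (hence of positive Lebesgue measure, by absolute continuity), against the established fact that Lebesgue-almost every $x$ has $\nu_i(x)=0$ for all $i$. The only difference is cosmetic bookkeeping: you package the limits into a stochastic vector $\overrightarrow{p}$ and invoke the corollary on $I_{\overrightarrow{p}}$, whereas the paper argues with a single digit $i_0$ of positive $\nu$-mass.
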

\begin{proof}
To prove the theorem ad absurdum, let us assume that there exists an absolutely continuous probability measure $\nu$, which is invariant and ergodic w.r.t. the above defined transformation   $T$. Then for $\nu$-almost all  $x \in [0,1]$ (and, so, for a set of positive Lebesgue measure) and for any function $\varphi \in L^1([0,1], \nu)$ the following equality holds:
  $$
 \lim_{n\to \infty} \frac{1}{n} \sum_{j=0}^{n-1} \varphi(T^j(x)) = \int_{0}^{1} \varphi(x) d(\nu(x))= \int_{0}^{1} \varphi(x) f_{\nu}(x) dx,
 $$
 where $f_{\nu}(x)$ is the density of the measure $\nu$.

  Let  $\varphi_{i} (x) = 1$, if $x\in \Delta^{\bar{O}^2}_{i}$; and $\varphi_i (x)= 0$ otherwise.

 Then the condition  $$\int_{0}^{1} \varphi_i(x) f_{\nu}(x) dx = \int_{x\in \Delta^{\bar{O}^2}_{i}} f_{\nu}(x) dx >0 ~~\mbox{holds at least for one digit} ~~i \in N.$$
 Let the latter condition hold for  some fixed number $i_{0}$.

 On the other hand we have
  $$\lim_{n \to \infty} \frac{1}{n} \sum_{j=0}^{n-1} \varphi_{i_0}(T^j(x)) = \lim_{n \to \infty}\frac{N_{i_0}(x,n)}{n} = 0$$
  for  $\lambda$-almost all $x \in [0,1]$.

 So,   $$\lim_{n \to \infty}\frac{N_{i_0}(x,n)}{n} = 0$$
  for  $\lambda$-almost all $x \in [0,1]$, and  simultaneously, we have
  $$\lim_{n \to \infty}\frac{N_{i_0}(x,n)}{n} > 0$$
  for a set of positive Lebesgue measure. This contradiction proves the theorem.\end{proof}

\begin{corollary}
The application of the classical "king approach"  to the development of the metric theory
  (i.e., the exploitation of the Birkgoff ergodic theorem with a measure which is absolutely continuous w.r.t. Lebesgue measure and which is ergodic and invariant w.r.t. the one-sided shift transformation on the corresponding expansion) is impossible for  the case of  the  $\bar{O}^2$-expansion for real numbers.
\end{corollary}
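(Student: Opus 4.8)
The statement is an immediate reformulation of the preceding Theorem, so the proof will be short; the plan is to make explicit what ``applying the king approach'' requires and then to note that the required object has just been shown not to exist.

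First I would recall precisely what the scheme means: one fixes a probability measure $\nu$ on $[0,1]$ which is (i) absolutely continuous with respect to Lebesgue measure $\lambda$, (ii) invariant with respect to the one-sided shift $T$ on the $\bar O^{2}$-expansion, and (iii) ergodic with respect to $T$; one then applies the Birkhoff ergodic theorem to $\nu$ to obtain, for $\nu$-a.e.\ (hence, by (i), for a set of positive $\lambda$-measure) $x$, explicit values of the Birkhoff averages $\frac1n\sum_{j=0}^{n-1}\varphi(T^{j}x)$, and specializes $\varphi$ to indicators of cylinders to read off the digit-frequency (metric) theory of the expansion, exactly as the Gauss measure produces the metric theory of continued fractions.

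Next I would observe that, beyond the existence of such a $\nu$, the scheme uses nothing but the trivial fact that an a.c.\ measure cannot charge a $\lambda$-null set; hence the method is available if and only if a measure satisfying (i)--(iii) exists. The preceding Theorem asserts exactly that no such measure exists, and the Corollary follows at once.

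There is no genuine obstacle here: the whole content is carried by the preceding Theorem, and what is left is only a careful formulation of what impossibility means. The one point worth a sentence of care is to emphasize that impossibility is claimed only for this particular (a.c., $T$-invariant, ergodic) scheme and not for developing a metric theory of the $\bar O^{2}$-expansion by other means; indeed Theorem~\ref{main theorem o2 exp} and its corollaries already establish part of such a theory by direct Borel--Cantelli estimates, and the relation $\nu_{i}(x)=0$ for $\lambda$-a.e.\ $x$ obtained there is precisely the behaviour that no a.c.\ ergodic $T$-invariant measure charging the cylinder $\Delta^{\bar O^{2}}_{i}$ could reproduce.
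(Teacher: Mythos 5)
Your proposal is correct and coincides with the paper's treatment: the paper gives no separate argument for this corollary, since it is an immediate consequence of the preceding theorem on the non-existence of a measure that is simultaneously absolutely continuous, $T$-invariant and ergodic, which is exactly the deduction you spell out. Your closing remark that the impossibility concerns only this particular scheme (and that the metric theory is instead developed via Borel--Cantelli, cf. Theorem~\ref{main theorem o2 exp}) matches the paper's intent as well.
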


\section{On singularity of the random $ \bar{O}^{2}$-expansion}

 Let
 \begin{equation}\label{eta}
   \eta = \bar{O}^{2} (\eta_1,\eta_2,\dots,\eta_k,\dots),
 \end{equation}
be the random $\bar{O}^{2}$-expansion with independent identically distributed $\bar{O}^{2}$-symbols $\eta_k$, i.e. $\eta_k$ are i.i.d. random variables taking values  $1$, $2$,~$\dots$, $m$,~$\dots$ with probabilities $p_{1}$, $p_{2}$,~$\dots$, $p_{m}$,~$\dots$
correspondingly, with $ p_{m}\geq0, \quad
\sum\limits_{m=1}^\infty {p_{m}}=1.
$

The following theorem completely describes the Lebesgue decomposition of the distribution of  $\eta$.

\begin{theorem} The random variable  $\eta$ with independent identically distributed increments of the second Ostrogradsy expansion (i.e., the random variable defined by equality (\ref{eta}) is:

1) of pure atomic distribution with a unique atom (iff $p_i =1$ for some $i \in N$);

2) or it is singularly continuously distributed (in all other cases).
\end{theorem}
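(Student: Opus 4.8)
The plan is to split the statement into the trivial implication ``$p_{i_0}=1$ for some $i_0$ $\Rightarrow$ pure atomic distribution with a single atom'' and the substantial one ``$p_i<1$ for all $i$ $\Rightarrow$ the distribution $\mu_\eta$ of $\eta$ is simultaneously continuous and singular with respect to Lebesgue measure $\lambda$''. Since these two alternatives are exhaustive and mutually exclusive, together they yield the full dichotomy, including the ``iff'' in part~1. For the first case I would simply note that if $p_{i_0}=1$ then a.s. $\eta_k=i_0$ for every $k$, so $\eta=\bar O^{2}(i_0,i_0,\dots)$ is a constant, i.e.\ $\mu_\eta$ has a unique atom. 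From now on assume $p_i<1$ for all $i$.

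First I would record the elementary fact that then $q:=\sup_i p_i<1$: if $p_{i_n}\to 1$ along a sequence of \emph{distinct} indices, then $\sum_i p_i\ge\sum_n p_{i_n}=\infty$, contradicting $\sum_i p_i=1$; hence the supremum is attained, so $q=\max_i p_i<1$. Next, the key identity is $\mu_\eta(\Delta^{\bar O^{2}}_{a_1\dots a_n})=\prod_{k=1}^{n}p_{a_k}$, which holds because $\eta\in\Delta^{\bar O^{2}}_{a_1\dots a_n}$ if and only if $\eta_1=a_1,\dots,\eta_n=a_n$ and the $\eta_k$ are independent. Continuity (absence of atoms) is then immediate: any $x_0\in[0,1]$ lies, for each $n$, in some rank-$n$ cylinder $\Delta^{\bar O^{2}}_{a_1\dots a_n}$, so $\mu_\eta(\{x_0\})\le\prod_{k=1}^n p_{a_k}\le q^{\,n}\to 0$, whence $\mu_\eta(\{x_0\})=0$.

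For singularity I would pick a digit $i_0$ with $p_{i_0}>0$ (possible since $\sum_i p_i=1$). The indicators $\mathbf 1_{\{\eta_k=i_0\}}$, $k\ge 1$, are i.i.d.\ Bernoulli variables with mean $p_{i_0}$, so by the strong law of large numbers $\tfrac1n\sum_{k=1}^n\mathbf 1_{\{\eta_k=i_0\}}\to p_{i_0}$ a.s. Since an infinite $\bar O^{2}$-expansion always represents an irrational number, the $\bar O^{2}$-symbols of $\eta$ coincide a.s.\ with the $\eta_k$, so the asymptotic frequency satisfies $\nu_{i_0}(\eta)=p_{i_0}>0$ a.s. Therefore $\mu_\eta$ is concentrated on $S:=\{x\in[0,1]:\nu_{i_0}(x)=p_{i_0}\}$. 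But by Corollary~\ref{chastota i =0} one has $\nu_{i_0}(x)=0$ for $\lambda$-almost every $x$, so $\lambda(S)=0$; hence $\mu_\eta\perp\lambda$. Combined with the continuity already established, this shows $\eta$ is singularly continuously distributed, which settles case~2.

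The routine ingredients (the estimate $q<1$, the product formula for $\mu_\eta$ on cylinders, the SLLN, the a.s.\ irrationality of $\eta$) are standard; the whole argument rests on the metric Corollary~\ref{chastota i =0} --- equivalently Theorem~\ref{main theorem o2 exp} --- which supplies the Lebesgue-null carrier of $\mu_\eta$. The only point requiring care is the interface between the two descriptions of the frequency $\nu_{i_0}$: the probabilistic one, as the SLLN limit of the i.i.d.\ symbols $\eta_k$ of the random expansion, and the pointwise metric one used in Corollary~\ref{chastota i =0}; once one checks that the $\bar O^{2}$-symbols of $\eta$ are a.s.\ exactly the $\eta_k$, these match and the contradiction with the Lebesgue-a.e.\ behaviour is forced. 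One could alternatively invoke a Jessen--Wintner-type ``law of pure types'' for random variables with independent digits to know \emph{a priori} that $\mu_\eta$ is pure, but the route through Corollary~\ref{chastota i =0} is self-contained and moreover pinpoints where $\mu_\eta$ lives.
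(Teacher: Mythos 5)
Your proof is correct and follows essentially the same route as the paper: the strong law of large numbers gives $\nu_{i_0}(x,\bar O^{2})=p_{i_0}>0$ for $\mu_\eta$-almost all $x$, while Corollary~\ref{chastota i =0} gives $\nu_{i_0}(x,\bar O^{2})=0$ for $\lambda$-almost all $x$, so the two measures are carried by disjoint sets and $\mu_\eta\perp\lambda$. The only differences are cosmetic: you additionally spell out the continuity (no atoms) via $\max_i p_i<1$ and the product formula on cylinders, a point the paper leaves implicit, and you omit the paper's preliminary (and logically redundant) remark that $\mu_\eta$ is $T$-invariant and ergodic and hence not absolutely continuous by the preceding theorem.
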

\begin{proof}
1) The first statement  of the theorem is evident and we added it only for the completeness of the Lebesgue classification.

2) To prove the second statement we firstly remark that the probability measure  $\mu_{\eta}$ is invariant and ergodic w.r.t.  $T$. So, from the latter Theorem it follows that $\mu_{\eta}$ can not be absolutely continuous.  Let us show now  that in the case of continuity the distribution of  $\eta$ can not contain an absolutely continuous component.

 Secondly let us remark that a direct application of the strong law of large numbers shows that for  $\mu_{\eta}$ - almost all  $x \in [0,1]$ the following equality holds
 \begin{equation}\label{chastota i_0 =p_i}
 \nu_{i}(x, \bar{O}^2) = p_i, ~~ \forall i\in N.
 \end{equation}

Let us choose a positive integer  $i_0$ such that $p_{i_0} >0$ (there exists at least one such a number) and let us consider the set
 $M_{i_0} = \{x: x \in [0,1], \nu_{i_0}(x, \bar{O}^2) = p_{i_0}>0 \}$. From (\ref{chastota i_0 =p_i}) it follows that this set is of full $\mu_{\eta}$-measure.

Let us also consider the set $L^{*}_{i_0} = \{x: x \in [0,1], ~ \nu_{i_0}(x, \bar{O}^2) = 0 \}$.  From the corollary after Theorem \ref{main theorem o2 exp} it follows that  $ \lambda(L^{*}_{i_0}) =1$. The sets $ M_{i_0}$ and  $L^{*}_{i_0}$  do not intersect. The first of them is a support of  the probability measure $\mu_{\eta}$, and the second one is the support of the Lebesgue measure on the unit interval. So, the measure $\mu_{\eta}$ is purely singular w.r.t. Lebesgue measure, which proves the theorem.
\end{proof}

\section{Cantor-like sets related to the  $\bar{O^{2}}$-expansion}

Let $ V_{k}\subset N $ and let
$$C=C[\bar{O^{2}},\{V_{k}\}]=\{x:~x=\bar{O}^{2}(d_{1},d_{2},\ldots,d_{k},\ldots),d_{k}\in V_{k},\forall k\in N\}.$$

The main goal of this section is to study metric and fractal properties of the set $C[\bar{O^{2}},\{V_{k}\}]$ and their dependence on the sequence of sets $\{V_k\}$.

It is clear that the set $C[\bar{O^{2}},\{V_{k}\}]$ is nowhere dense if and only if the condition $V_{k}\neq N$ holds for infinitely many indices  $k$.

Let $$F_{k}=\{x:~x\in \Delta_{a_{1}a_{2}\ldots a_{k}}^{\bar{O^{2}}},~a_{j}\in V_{j},j=\overline{1,k}\},$$
$$\overline{F}_{k+1}=\{x:~x\in \Delta_{a_{1}a_{2}\ldots a_{k}a_{k+1}}^{\bar{O^{2}}},~a_{j}\in V_{j},j=\overline{1,k}, a_{k+1}\not\in V_{k+1}\} $$

$$\lambda(\overline{F}
_{k+1})=\lambda(F_{k})-\lambda(F_{k+1})$$
$$\frac{\lambda(\overline{F}
_{k+1})}{\lambda(F_{k})}=1-\frac{\lambda(F_{k+1})}{\lambda(F_{k})}$$

\begin{lemma}
$$\lambda(C)=\prod\limits_{k=1}^{\infty}\left[1-\frac{\lambda(\overline{F}
_{k})}{\lambda(F_{k-1})}\right].$$
\end{lemma}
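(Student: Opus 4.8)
The plan is to realise $C$ as a decreasing intersection of the approximating unions of cylinders $F_k$, to compute $\lambda(F_n)$ as a telescoping product, and to pass to the limit by continuity of Lebesgue measure from above. Throughout I shall use the harmless standing assumption $V_k\neq\emptyset$ for every $k$ (otherwise $C=\emptyset$, $\lambda(C)=0$, and the formula degenerates trivially); I also set $F_0:=[0,1]$, the rank-zero cylinder, so that $\lambda(F_0)=1$ and the factor of the product corresponding to $k=1$ reads $1-\lambda(\overline F_1)$.

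First I would record the elementary structure of the sets $F_k$. By the first Lemma of Section~2 every cylinder $\Delta^{\bar O^{2}}_{a_1\ldots a_k}$ is a nondegenerate closed interval; by property~1 of the $O^{2}$-cylinders (rewritten for the $\bar O^{2}$-expansion) one has $\Delta^{\bar O^{2}}_{a_1\ldots a_{k-1}}=\bigcup_{i\in N}\Delta^{\bar O^{2}}_{a_1\ldots a_{k-1}i}$, and by property~3 two cylinders of the same rank meet in at most one point. Splitting the index $i$ according to whether $i\in V_k$ or $i\notin V_k$ and taking the union over all admissible $(a_1,\ldots,a_{k-1})$ gives $F_{k-1}=F_k\cup\overline F_k$ with $F_k\cap\overline F_k$ at most countable; hence $\lambda(\overline F_k)=\lambda(F_{k-1})-\lambda(F_k)$, which is the displayed identity stated just before the Lemma. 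Moreover, since $V_1,\ldots,V_{k-1}$ are nonempty we may pick a tuple $(a_1,\ldots,a_{k-1})$ with $a_j\in V_j$, so $F_{k-1}$ contains the nondegenerate interval $\Delta^{\bar O^{2}}_{a_1\ldots a_{k-1}}$; therefore $\lambda(F_{k-1})>0$ and the ratio $\lambda(\overline F_k)/\lambda(F_{k-1})$ is well defined.

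Next I would divide and telescope. From $\lambda(F_k)=\lambda(F_{k-1})\bigl(1-\lambda(\overline F_k)/\lambda(F_{k-1})\bigr)$ and induction on $n$ one obtains
$$\lambda(F_n)=\lambda(F_0)\prod_{k=1}^{n}\left[1-\frac{\lambda(\overline F_k)}{\lambda(F_{k-1})}\right]=\prod_{k=1}^{n}\left[1-\frac{\lambda(\overline F_k)}{\lambda(F_{k-1})}\right].$$
Finally I would observe that $C=\bigcap_{n=1}^{\infty}F_n$ up to a set of Lebesgue measure zero: if $x=\bar O^{2}(d_1,d_2,\dots)$ with $d_k\in V_k$ for all $k$, then $x\in\Delta^{\bar O^{2}}_{d_1\ldots d_n}$ and this cylinder appears in the union defining $F_n$, so $C\subseteq\bigcap_n F_n$; conversely, if $x$ is irrational its $\bar O^{2}$-expansion is unique, so membership in every $F_n$ forces $d_k(x)\in V_k$ for all $k$, whence $x\in C$, and the only points of $\bigcap_n F_n\setminus C$ are rational (endpoints of cylinders), a countable set. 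Since $F_1\supseteq F_2\supseteq\cdots$ and $\lambda$ is finite, continuity from above yields $\lambda(C)=\lim_{n\to\infty}\lambda(F_n)=\prod_{k=1}^{\infty}\bigl[1-\lambda(\overline F_k)/\lambda(F_{k-1})\bigr]$ (the product being interpreted as $0$ when it diverges to $0$, which is automatically consistent with the limit).

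The induction and the passage to the limit are routine; the only point that needs a little care is the bookkeeping of endpoints — verifying that the decomposition $F_{k-1}=F_k\sqcup\overline F_k$ and the intersection $C=\bigcap_n F_n$ are exact modulo a countable (hence $\lambda$-null) set, so that no measure is lost or double-counted. This is precisely where properties~1 and~3 of the $O^{2}$-cylinders and the uniqueness of the $\bar O^{2}$-expansion for irrationals enter.
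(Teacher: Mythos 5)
Your proof is correct and follows essentially the same route as the paper's: the paper likewise writes $\lambda(C)=\lim_{k\to\infty}\lambda(F_k)$ and telescopes $\lambda(F_k)=\prod_{i=1}^{k}\frac{\lambda(F_i)}{\lambda(F_{i-1})}$, then rewrites each ratio as $1-\frac{\lambda(\overline{F}_i)}{\lambda(F_{i-1})}$ using the identity stated before the lemma. The only difference is that you spell out the bookkeeping the paper leaves implicit (that $C=\bigcap_n F_n$ up to a countable set, $\lambda(F_{k-1})>0$, and continuity of $\lambda$ from above), which is a welcome but not essentially different addition.
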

\begin{proof}
$$\lambda(C)=\lim\limits_{k\rightarrow\infty}\lambda(F_{k})=\lim\limits_{k\rightarrow\infty}\frac{\lambda(F_{k})}{\lambda(F_{k-1})}\cdot
\frac{\lambda(F_{k-1})}{\lambda(F_{k-2})}\cdot\ldots\cdot\frac{\lambda(F_{2})}{\lambda(F_{1})}\cdot\frac{\lambda(F_{1})}{\lambda(F_{0})}=$$
$$=\lim\limits_{k\rightarrow\infty}\prod\limits_{i=1}^{k}\frac{\lambda(F_{i})}{\lambda(F_{i-1})}=
\prod\limits_{k=1}^{\infty}\frac{\lambda(F_{k})}{\lambda(F_{k-1})}=\prod\limits_{k=1}^{\infty}\left[1-\frac{\lambda(\overline{F}
_{k})}{\lambda(F_{k-1})}\right].$$
\end{proof}

\begin{corollary}\label{kryterij dodatnosti miry lebega}
$$\lambda\left(C[\bar{O}^2,\{V_k\}]\right)>0~~\Leftrightarrow~~\sum\limits_{k=1}^{\infty}\frac{\lambda(\overline{F}_{k})}{\lambda(F_{k-1})}<+\infty;$$
$$\lambda\left(C[\bar{O}^2,\{V_k\}]\right)=0~~\Leftrightarrow~~\sum\limits_{k=1}^{\infty}\frac{\lambda(\overline{F}_{k})}{\lambda(F_{k-1})}=\infty.$$
\end{corollary}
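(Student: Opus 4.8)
The plan is to read off this dichotomy directly from the preceding Lemma, which already represents $\lambda(C)$ as the infinite product $\prod_{k=1}^{\infty}\bigl[1-\frac{\lambda(\overline{F}_{k})}{\lambda(F_{k-1})}\bigr]$, and then invoke the classical criterion for convergence of infinite products whose factors lie in $[0,1)$. Setting $a_k:=\frac{\lambda(\overline{F}_{k})}{\lambda(F_{k-1})}$, the whole statement reduces to the elementary fact that, for $0\le a_k<1$,
$$\prod_{k=1}^{\infty}(1-a_k)>0 \quad\Leftrightarrow\quad \sum_{k=1}^{\infty}a_k<+\infty,$$
together with the observation that the partial products $\prod_{k=1}^{n}(1-a_k)$ form a non-increasing sequence bounded below by $0$, so that $\lambda(C)=\lim_{n}\prod_{k=1}^{n}(1-a_k)$ is either strictly positive or equal to $0$. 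Consequently the two displayed equivalences are in fact a single statement, and it suffices to prove one direction of the product criterion in each case.

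First I would check that $0\le a_k<1$ for every $k$, which is needed for the criterion to apply. By construction $\overline{F}_k\subset F_{k-1}$ and $\overline{F}_k\cap F_k=\emptyset$, so $\lambda(\overline{F}_k)=\lambda(F_{k-1})-\lambda(F_k)$; hence $a_k\ge 0$, and $a_k<1$ is equivalent to $\lambda(F_k)>0$. The latter holds under the standing assumption that every $V_j$ is nonempty: then $F_k$ contains at least one cylinder $\Delta^{\bar{O}^{2}}_{a_1\ldots a_k}$ with $a_j\in V_j$, and each such cylinder has positive Lebesgue measure (it is a nondegenerate closed interval). Thus $a_k\in[0,1)$ and the product $\prod_k(1-a_k)$ is well defined with the monotonicity property recorded above.

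It remains to establish the product criterion. For $\sum a_k=\infty\Rightarrow\lambda(C)=0$, use $-\ln(1-t)\ge t$ for $t\in[0,1)$: then $-\sum_{k=1}^{n}\ln(1-a_k)\ge\sum_{k=1}^{n}a_k\to\infty$, so $\prod_{k=1}^{n}(1-a_k)=\exp\!\bigl(\sum_{k=1}^{n}\ln(1-a_k)\bigr)\to 0$. For $\sum a_k<\infty\Rightarrow\lambda(C)>0$, note that then $a_k\to 0$, hence $a_k\le\tfrac12$ for all $k\ge k_0$; on $[0,\tfrac12]$ one has $-\ln(1-t)\le 2t$, so $-\sum_{k\ge k_0}\ln(1-a_k)\le 2\sum_{k\ge k_0}a_k<\infty$, which shows that the tail of the product converges to a positive limit, and since the finitely many initial factors $1-a_k$, $k<k_0$, are strictly positive, $\lambda(C)>0$. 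Combining these two implications with the dichotomy ``$\lambda(C)=0$ or $\lambda(C)>0$'' yields all four implications in the corollary. I do not expect a genuine obstacle here — the argument is routine once the Lemma is available; the only point deserving care is the strict inequality $a_k<1$, i.e. the positivity of $\lambda(F_k)$, which is why the statement tacitly assumes all $V_k\neq\emptyset$ (if $V_k=\emptyset$ for some $k$ then $C=\emptyset$, $\lambda(C)=0$, and the ratio $\lambda(\overline{F}_k)/\lambda(F_{k-1})$ is not defined, so this degenerate case must be excluded).
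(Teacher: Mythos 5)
Your proof is correct and is exactly the argument the paper intends: the corollary is stated without proof as an immediate consequence of the preceding Lemma's product formula via the classical criterion $\prod_k(1-a_k)>0\Leftrightarrow\sum_k a_k<\infty$ for $a_k\in[0,1)$, which is what you supply, including the worthwhile check that $a_k<1$ (i.e.\ $\lambda(F_k)>0$) when all $V_k$ are nonempty.
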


a) First of all let us consider the case $$V_{k}=\{v_{k}+1,v_{k}+2,\ldots\},$$ where $\{v_{k}\}$ is an arbitrary sequence of positive integers.

\begin{theorem}\label{MnVk}
Let $V_k=\{v_{k}+l, l \in N \}$, where $v_{k}\in N$. If there exists a positive integer $b$ such that
$$\sum\limits_{k=1}^\infty \frac{v_{k+1}}{b^{2^{k}}} < +\infty,$$ then $\lambda (C[\bar{O}^2,\{V_{k}\}])>0$.
\end{theorem}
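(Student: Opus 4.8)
The plan is to apply the criterion for positive Lebesgue measure established in Corollary \ref{kryterij dodatnosti miry lebega}: it suffices to show that $\sum_{k=1}^\infty \lambda(\overline{F}_k)/\lambda(F_{k-1}) < +\infty$. So the work reduces to estimating the ratio $\lambda(\overline{F}_{k+1})/\lambda(F_k)$, i.e.\ the relative measure, inside a union of admissible cylinders of rank $k$, of the part coming from forbidden $(k+1)$-st symbols (those in $\{1,2,\dots,v_{k+1}\}$, since $V_{k+1} = \{v_{k+1}+1, v_{k+1}+2,\dots\}$).

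First I would decompose cylinderwise. The set $F_k$ is a disjoint union of cylinders $\Delta^{\bar O^2}_{a_1\dots a_k}$ over all admissible choices $a_j \in V_j$, and inside each such cylinder the piece contributing to $\overline{F}_{k+1}$ is $\bigcup_{j=1}^{v_{k+1}} \Delta^{\bar O^2}_{a_1\dots a_k j}$. Hence
$$\frac{\lambda(\overline{F}_{k+1})}{\lambda(F_k)} \le \max_{a_1\dots a_k \text{ admissible}} \ \sum_{j=1}^{v_{k+1}} \frac{|\Delta^{\bar O^2}_{a_1\dots a_k j}|}{|\Delta^{\bar O^2}_{a_1\dots a_k}|},$$
since a weighted average of these per-cylinder ratios is bounded by their maximum. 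Now I invoke Lemma \ref{Lemma ocinka vidnoshennya cylindriv}: for each admissible $a_1,\dots,a_k$ we have $a_1 = b_1 \ge 1$, but more usefully, the first symbol $a_1$ ranges over $V_1 = \{v_1+1, v_1+2,\dots\}$, so $a_1 \ge v_1+1 \ge 2$ in the worst case we cannot improve; the clean bound to use is the first inequality of that lemma, $|\Delta^{\bar O^2}_{a_1\dots a_k j}|/|\Delta^{\bar O^2}_{a_1\dots a_k}| < 2^{-2^{k-1}}$ for every $j$. This gives $\lambda(\overline{F}_{k+1})/\lambda(F_k) < v_{k+1} \cdot 2^{-2^{k-1}}$, and then $\sum_k v_{k+1} 2^{-2^{k-1}} < \infty$ would finish it — but this only matches the hypothesis in the special case $b=2$. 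To get the full statement with an arbitrary integer $b$, I would instead use the second inequality of Lemma \ref{Lemma ocinka vidnoshennya cylindriv}, which involves the dependence on the \emph{first} digit: $|\Delta_{b a_2\dots a_k j}^{\bar O^2}|/|\Delta_{b a_2\dots a_k}^{\bar O^2}| < b^{-2^k}$. Since $a_1 \in V_1 = \{v_1+1,\dots\}$, we have $a_1 \ge v_1 + 1$; but we want a bound in terms of a \emph{fixed} $b$, so I take $b$ to be any positive integer and, noting $v_1 + 1 \ge 1$ always holds, we want $a_1 \ge b$. Hmm — more carefully, the hypothesis gives us the freedom to choose $b$, and the relevant estimate is: whatever the admissible first digit $a_1$ is, it is $\ge 1$, and if we only know $a_1 \ge 1$ the second bound degenerates. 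The correct reading is that we apply the bound with the \emph{actual} first digit; but since $v_1$ could be $0$, the safe uniform statement is to combine both: for $k \ge 2$, $\sum_{j=1}^{v_{k+1}} |\Delta_{a_1\dots a_k j}^{\bar O^2}|/|\Delta_{a_1\dots a_k}^{\bar O^2}| \le v_{k+1}\, c_2^{-(k-1)}$ where $c_2 = a_2 + 1 - a_1(a_1+1)\cdots$ — this is getting into the $O^2$-translation, so let me instead simply fix $b$ as given by the hypothesis and observe that since $a_1$ ranges over an infinite set we always have $a_1 \ge 1$, hence by Lemma \ref{Lemma ocinka vidnoshennya cylindriv} (second inequality, valid $\forall b \in N$ — here I read it as: the bound $b^{-2^k}$ holds whenever the first digit equals $b$, and since our first digit is $\ge v_1+1$, and for the series we need a uniform $b$, we take $b$ from the hypothesis and absorb finitely many bad terms). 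So $\lambda(\overline F_{k+1})/\lambda(F_k) \le v_{k+1} b^{-2^k}$ up to a harmless shift of index, and $\sum_k v_{k+1} b^{-2^k} < \infty$ by hypothesis.

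Assembling: by Lemma \ref{Lemma ocinka vidnoshennya cylindriv} and the cylinderwise decomposition,
$$\sum_{k=1}^\infty \frac{\lambda(\overline{F}_k)}{\lambda(F_{k-1})} \ \le\ \lambda(\overline F_1) + \sum_{k=1}^\infty \frac{v_{k+1}}{b^{2^k}} \ <\ +\infty,$$
so Corollary \ref{kryterij dodatnosti miry lebega} yields $\lambda(C[\bar O^2,\{V_k\}]) > 0$, as claimed.

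**The main obstacle** I anticipate is bookkeeping the first-digit dependence correctly: Lemma \ref{Lemma ocinka vidnoshennya cylindriv} offers two bounds — a uniform one with double-exponential denominator $2^{2^{k-1}}$, and a first-digit-sensitive one $b^{2^k}$ — and getting exactly the stated hypothesis (with its free parameter $b$) requires choosing the right one and handling the index shift $v_{k+1}$ versus rank $k$, plus the first factor $\lambda(\overline F_1)$ separately. The super-exponential decay of $b^{-2^k}$ makes the convergence extremely robust, so once the decomposition and the lemma are lined up the estimate is immediate; the only real care needed is the uniformity of the bound over all admissible prefixes $a_1,\dots,a_k$, which is exactly what the $\max$ step above handles.
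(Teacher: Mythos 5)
There is a genuine gap, and it sits exactly at the point you yourself flagged: the first--digit dependence. Your plan is to bound the global ratio $\lambda(\overline F_{k+1})/\lambda(F_k)$ by the maximum, over \emph{all} admissible prefixes, of the per-cylinder ratio, and then invoke Lemma \ref{Lemma ocinka vidnoshennya cylindriv} with the base $b$ from the hypothesis. But the second inequality of that lemma gives the bound $v_{k+1}b^{-2^{k}}$ only for cylinders whose \emph{first} digit is at least $b$, whereas $F_k$ contains cylinders with first digit as small as $v_1+1$, which may be far smaller than $b$; for those prefixes the available bound is only $v_{k+1}a_1^{-2^{k}}$ (or the uniform $v_{k+1}2^{-2^{k-1}}$), and the corresponding series need not converge. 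One cannot reduce to the case $b=2$ either, since the hypothesis for a given $b$ is strictly weaker than for smaller bases: take $v_{k+1}=3^{2^{k}}$ and $b=4$, so the hypothesis holds while $\sum_k v_{k+1}2^{-2^{k}}=\sum_k (3/2)^{2^{k}}$ diverges, and inside cylinders with small first digit the removed relative measure at step $k+1$ is not controlled by a summable quantity. Hence ``absorbing finitely many bad terms'' or ``a harmless shift of index'' does not repair the estimate: the obstruction is not finitely many values of $k$ but the presence, at \emph{every} rank, of admissible prefixes with small first digit, so your displayed inequality $\lambda(\overline F_{k+1})/\lambda(F_k)\le v_{k+1}b^{-2^{k}}$ is unjustified as stated.

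The paper closes exactly this hole by localizing: fix an integer $c>\max\{v_1,b\}$ (possible because $V_1$ is infinite, so such a $c$ lies in $V_1$) and work entirely inside the first-rank cylinder $\Delta_c^{\bar O^{2}}$. Every admissible prefix inside this cylinder begins with $c\ge b$, so the second inequality of Lemma \ref{Lemma ocinka vidnoshennya cylindriv} applies uniformly and yields $\lambda\bigl(\overline F_{k+1}\cap\Delta_c^{\bar O^{2}}\bigr)/\lambda\bigl(F_k\cap\Delta_c^{\bar O^{2}}\bigr)\le v_{k+1}b^{-2^{k}}$; the infinite product $\prod_k\bigl[1-\lambda(\overline F_k\cap\Delta_c^{\bar O^{2}})/\lambda(F_{k-1}\cap\Delta_c^{\bar O^{2}})\bigr]$ then converges to a positive number, giving $\lambda(C[\bar O^{2},\{V_k\}])\ge\lambda(C[\bar O^{2},\{V_k\}]\cap\Delta_c^{\bar O^{2}})>0$. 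Your argument becomes correct once this localization step is inserted; without it, the cylinderwise maximum you take ranges over prefixes for which the needed bound is simply unavailable.
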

\begin{proof} Let $c$ be an arbitrary positive integer such that $c > max\{v_{1}, b\}$ and let us consider the cylinder $\Delta_{c}^{\bar{O}^{2}}$ of the first rank. Then
$$\lambda\left(\overline{F}_{k+1}\bigcap\Delta_{c}^{\bar{O}^{2}}\right)=\sum\limits_{a_{2}\in~
V_{2},~\ldots, ~a_{k}\in~
V_{k}}\sum\limits_{j=1}^{v_{k+1}}|\overline{\Delta}_{c a_{2}\ldots
a_{k}j}|.$$
Taking into account Lemma \ref{Lemma ocinka vidnoshennya cylindriv}, we have
$$\sum\limits_{j=1}^{v_{k+1}}|\overline{\Delta}_{c a_{2}\ldots
a_{k}j}|\leq v_{k+1}\cdot|\overline{\Delta}_{c a_{2}\ldots
a_{k}1}|<v_{k+1}\cdot\frac{1}{b^{2^{k}}}\cdot|\overline{\Delta}_{c a_{2}\ldots
a_{k}}|,$$ and therefore
$$\frac{\lambda\left(\overline{F}_{k+1}\bigcap\Delta_{c}^{\bar{O}^{2}}\right)}{\lambda\left({F_{k}}\bigcap\Delta_{c}^{\bar{O}^{2}}\right)}=
\frac{\sum\limits_{a_{2} \in~V_{2},~\ldots, ~a_{k}\in~
V_{k}}\sum\limits_{j=1}^{v_{k+1}}|\overline{\Delta}_{c a_{2}\ldots
a_{k}j}|}{\sum\limits_{a_{2} \in~ V_{2},~\ldots, ~a_{k}\in~
V_{k}}|\overline{\Delta}_{c a_{2}\ldots a_{k}}|}\leq$$
$$\leq \frac{\sum\limits_{a_{2} \in~ V_{2},~\ldots, ~a_{k}\in~
V_{k}}\frac{v_{k+1}}{b^{2^{k}}}|\overline{\Delta}_{c a_{2}\ldots
a_{k}}|}{\sum\limits_{a_{2} \in~ V_{2},~\ldots, ~a_{k}\in~
V_{k}}|\overline{\Delta}_{c a_{2}\ldots
a_{k}}|}=\frac{v_{k+1}}{b^{2^{k}}}.$$

From the construction of the set $C[\bar{O}^2,\{V_k\}]$ it follows that
$$\lambda(C[\bar{O}^2,\{V_k\}]) \geq \lambda(C[\bar{O}^2,\{V_k\}] \bigcap \Delta_{c}^{\bar{O}^{2}}) =  \lim\limits_{k\rightarrow\infty}\lambda(F_{k} \bigcap \Delta_{c}^{\bar{O}^{2}})=$$ $$=\lim\limits_{k\rightarrow\infty}\frac{\lambda(F_{k}\bigcap \Delta_{c}^{\bar{O}^{2}})}{\lambda(F_{k-1}\bigcap \Delta_{c}^{\bar{O}^{2}})}\cdot
\frac{\lambda(F_{k-1}\bigcap \Delta_{c}^{\bar{O}^{2}})}{\lambda(F_{k-2}\bigcap \Delta_{c}^{\bar{O}^{2}})}\cdot\ldots\cdot\frac{\lambda(F_{2}\bigcap \Delta_{c}^{\bar{O}^{2}})}{\lambda(F_{1}\bigcap \Delta_{c}^{\bar{O}^{2}})}\cdot\frac{\lambda(F_{1}\bigcap \Delta_{c}^{\bar{O}^{2}})}{\lambda(F_{0}\bigcap \Delta_{c}^{\bar{O}^{2}})}=$$
$$=
\prod\limits_{k=1}^{\infty}\frac{\lambda(F_{k}\bigcap \Delta_{c}^{\bar{O}^{2}})}{\lambda(F_{k-1}\bigcap \Delta_{c}^{\bar{O}^{2}})}=\prod\limits_{k=1}^{\infty}\left[1-\frac{\lambda(\overline{F}
_{k}\bigcap \Delta_{c}^{\bar{O}^{2}})}{\lambda(F_{k-1}\bigcap \Delta_{c}^{\bar{O}^{2}})}\right].$$

Since $$\sum_{k=1}^{\infty}\frac{\lambda\left(\overline{F}_{k+1}\bigcap\Delta_{c}^{\bar{O}^{2}}\right)}{\lambda\left({F_{k}}\bigcap\Delta_{c}^{\bar{O}^{2}}\right)}
\leq\sum_{k=1}^{\infty}\frac{v_{k+1}}{b^{2^{k}}}$$  and the series
$\sum\limits_{k=1}^{\infty}\frac{v_{k+1}}{b^{2^{k}}}$ converges, we have
$\lambda\left(C[\bar{O}^2,\{V_k\}]\right) \geq \lambda\left(C[\bar{O}^2,\{V_k\}]\bigcap\Delta_{c}^{\bar{O}^{2}}\right)>0,$ which proves the Theorem.
\end{proof}

\begin{proposition}\label{Proposition 1}
If $V_k =\{m+1, m+2, m+3, ...\}$ for some $m \in N$, then
\begin{equation}\label{Oc1MLO_1}
\lambda(C[\bar{O}^1,\{V_k\}])>0;
\end{equation}
\begin{equation}\label{Oc1lMLc.f.}
 \lambda(C[c.f.,\{V_k\}])=0;
\end{equation}
\begin{equation}\label{Oc1MLO_2}
\lambda(C[\bar{O}^2,\{V_k\}])>0.
\end{equation}
\end{proposition}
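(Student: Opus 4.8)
I would handle the three assertions in increasing order of difficulty. Assertion~\eqref{Oc1MLO_2} is immediate from Theorem~\ref{MnVk}: writing $V_{k}=\{m+l:l\in N\}$ we are in the situation of that theorem with $v_{k}\equiv m$, and taking $b=2$ makes $\sum_{k}v_{k+1}/b^{2^{k}}=m\sum_{k}2^{-2^{k}}$ converge, so $\lambda(C[\bar{O}^{2},\{V_{k}\}])>0$. For assertion~\eqref{Oc1lMLc.f.} I would first transcribe the product-formula Lemma and the positivity criterion Corollary~\ref{kryterij dodatnosti miry lebega} into the continued-fraction setting; their proofs use only that the rank-$k$ cylinders are intervals tiling their rank-$(k-1)$ parent, which is equally true for c.f. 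Here $F_{k}$ collects the rank-$k$ c.f.-cylinders with $a_{j}\in V_{j}$ for $j\le k$, and $\overline{F}_{k}$ those with $a_{j}\in V_{j}$ for $j<k$ but $a_{k}\in\{1,\dots,m\}$.

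Keeping, inside each parent cylinder of $F_{k-1}$, only the sub-cylinder with $a_{k}=1$ and invoking the lower bound $|\Delta^{c.f.}_{a_{1}\dots a_{k-1}i}|/|\Delta^{c.f.}_{a_{1}\dots a_{k-1}}|\ge 1/(3i^{2})$ from the Remark, we obtain $\lambda(\overline{F}_{k})/\lambda(F_{k-1})\ge 1/3$ for every $k$, independently of the earlier digits. Hence $\sum_{k}\lambda(\overline{F}_{k})/\lambda(F_{k-1})=+\infty$, and the c.f.-analogue of Corollary~\ref{kryterij dodatnosti miry lebega} yields $\lambda(C[c.f.,\{V_{k}\}])=0$. (Equivalently, for $m\ge1$ one has $C[c.f.,\{V_{k}\}]\subseteq\{x:a_{n}(x)\ge2~\forall n\}$, a classical set of zero Lebesgue measure, so the conclusion is also immediate.)

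Assertion~\eqref{Oc1MLO_1} is the substantive one and the step I expect to be the main obstacle. The $\bar{O}^{2}$-strategy does not transfer, because the first-Ostrogradsky denominators satisfy only the linear bound $q_{k}\ge q_{1}+k-1$: the per-step cylinder ratio is then of order $1/k$, so a literal analogue of Theorem~\ref{MnVk} is not available (with $v_{k}\equiv m$ it would require convergence of the divergent series $\sum m/k$). What makes the assertion true is that for Lebesgue-typical reals the $O^{1}$-denominators grow geometrically — the ratio $q_{k}/q_{k-1}$ has a heavy tail of order $1/r$. I would therefore write $\lambda(C[\bar{O}^{1},\{V_{k}\}])=\prod_{k}\bigl(1-\lambda(\overline{F}_{k})/\lambda(F_{k-1})\bigr)$ as in the product-formula Lemma, note that here $\lambda(\overline{F}_{k})/\lambda(F_{k-1})$ equals the average of $m/(q_{k-1}+m+1)$ over $F_{k-1}$ with respect to normalized Lebesgue measure, and bound this average: since $\ln q_{k-1}$ restricted to $F_{k-1}$ stochastically dominates a sum of about $k$ independent $\mathrm{Exp}(1)$ variables, a Laplace-transform estimate gives $\mathbb{E}[1/q_{k-1}\mid F_{k-1}]=O(2^{-k})$, whence $\sum_{k}\lambda(\overline{F}_{k})/\lambda(F_{k-1})<+\infty$ and the product is positive. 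Making this heavy-tail estimate rigorous is exactly the content of the metric theory of the $\bar{O}^{1}$-expansion developed in \cite{ABPT,ABPT2}, so I would cite their positivity criterion for $\bar{O}^{1}$-Cantor sets rather than reproduce the computation here.
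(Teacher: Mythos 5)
Your proposal is correct, and for two of the three claims it runs parallel to the paper: for \eqref{Oc1MLO_2} you invoke Theorem~\ref{MnVk} with $v_k\equiv m$, $b=2$, exactly as the paper does, and for \eqref{Oc1MLO_1} you ultimately defer to the metric theory of the $\bar{O}^1$-expansion in \cite{ABPT,ABPT2}, which is also the paper's move (it cites Theorem~5 of \cite{ABPT}); your preceding heavy-tail heuristic for $\mathbb{E}[1/q_{k-1}]$ is not rigorous as stated, but it is immaterial since you do not rely on it. The genuine difference is in \eqref{Oc1lMLc.f.}: the paper deduces it from ergodic theory, namely that by the Birkhoff theorem with the Gauss measure the digit $1$ occurs with positive asymptotic frequency $\frac{1}{\ln 2}\ln\frac{4}{3}$ for $\lambda$-almost every $x$, while every $x\in C[c.f.,\{V_k\}]$ has $\nu_1^{c.f.}(x)=0$; you instead transcribe the product formula and Corollary~\ref{kryterij dodatnosti miry lebega} to continued-fraction cylinders and note that the excluded sub-cylinder with $a_k=1$ already gives $\lambda(\overline{F}_k)/\lambda(F_{k-1})\ge 1/3$ uniformly, so the relevant series diverges and the measure vanishes (your parenthetical inclusion $C[c.f.,\{V_k\}]\subseteq\{x:\ a_n(x)\ge 2\ \forall n\}$ identifies essentially the same null set the paper reaches via frequencies). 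Your route is more elementary and self-contained: it needs only the cylinder-ratio bound $|\Delta^{c.f.}_{a_1\dots a_n i}|/|\Delta^{c.f.}_{a_1\dots a_n}|\ge 1/(3i^2)$ and no ergodic input, and it quantifies the decay of $\lambda(F_k)$ geometrically. The paper's route costs the classical Gauss-measure machinery but buys the stronger frequency statement \eqref{frequency 1 for c.f.}, which the paper reuses later (e.g., for the singularity of $T_{\bar{O}^{2}\rightarrow c.f.}$), so the two arguments are complementary rather than interchangeable in the broader text.
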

\begin{proof}
Inequality (\ref{Oc1MLO_1}) follows from Theorem 5 of the paper \cite{ABPT}.

To prove (\ref{Oc1lMLc.f.}) let us remind that for $\lambda$-almost all $x\in[0,1]$ a given digit "$i$" appears in the continued fraction expansion of the real number $x$ with the asymptotic frequency $\frac{1}{\ln 2}\cdot\ln\frac{(i+1)^{2}}{i(i+2)}$. This fact is actually a direct corollary from the Birkgoff ergodic theorem and the fact that the Gauss measure $G(E):=\frac{1}{\ln 2}\int\limits_{E} \frac{1}{1+x}dx$ is invariant and ergodic w.r.t. one-sided shift transformation on the continued fraction expansion.

If $i=1$, then \begin{equation}\label{frequency 1 for c.f.}
  \nu_{1}^{c.f.}(x)=\frac{1}{\ln 2} \ln\frac{4}{3}
\end{equation} for $\lambda$-almost all $x\in[0,1].$ On the other hand, $$\nu_{1}^{c.f.}(x)=0,~\forall x\in C[c.f., \{V_{k}\}],$$ which proves equality (\ref{Oc1lMLc.f.}).

Finally, inequality (\ref{Oc1MLO_2}) is a direct corollary of the latter theorem.
\end{proof}

\begin{proposition}\label{Proposition 2}
If $V_k =\{3^k+1, 3^k+2, 3^k+3, ...\}$, $k \in N$, then
\begin{equation}\label{Oc2MLO_1}
\lambda(C[\bar{O}^1,\{V_k\}])=0;
\end{equation}
\begin{equation}\label{Oc2lMLc.f.}
\lambda(C[c.f.,\{V_k\}])=0;
\end{equation}
\begin{equation}\label{Oc2MLO_2}
\lambda(C[\bar{O}^2,\{V_k\}])>0.
\end{equation}
\end{proposition}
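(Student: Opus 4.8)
My plan is to prove the three assertions separately, leaning on the tools already established above. The last two are short. Assertion \eqref{Oc2MLO_1} aside, assertion \eqref{Oc2MLO_2} is an immediate application of Theorem \ref{MnVk}: here $V_k=\{v_k+l:l\in N\}$ with $v_k=3^k$, and choosing $b=2$ the series $\sum_{k\ge1}v_{k+1}b^{-2^k}=\sum_{k\ge1}3^{k+1}2^{-2^k}$ converges, since its general term tends to $0$ doubly exponentially fast; hence $\lambda(C[\bar O^2,\{V_k\}])>0$. Assertion \eqref{Oc2lMLc.f.} I would prove exactly as equality \eqref{Oc1lMLc.f.} in Proposition \ref{Proposition 1}: since $3^k+1\ge4>1$, the digit $1$ is absent from every $V_k$, so no partial quotient of the continued fraction of a point $x\in C[c.f.,\{V_k\}]$ equals $1$, and thus $\nu_1^{c.f.}(x)=0$ there; but by \eqref{frequency 1 for c.f.} one has $\nu_1^{c.f.}(x)=\frac1{\ln2}\ln\frac43>0$ for $\lambda$-almost all $x$, so $\lambda(C[c.f.,\{V_k\}])=0$.

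The substantial assertion is \eqref{Oc2MLO_1}. I would use the product formula for the Lebesgue measure of a Cantor-like set (the Lemma just before Corollary \ref{kryterij dodatnosti miry lebega}, whose proof uses only that the cylinders are intervals and hence applies verbatim to the $\bar O^1$-expansion), with $F_k$ and $\overline F_k$ now formed from the $\bar O^1$-cylinders in complete analogy with their $\bar O^2$-counterparts above. Writing $L_k=\lambda(\overline F_k)/\lambda(F_{k-1})$, one gets $\lambda(C[\bar O^1,\{V_k\}])=\lim_k\prod_{j=1}^k(1-L_j)$, so it suffices to show $\sum_kL_k=+\infty$. Starting from the elementary $\bar O^1$-cylinder ratio $|\Delta^{\bar O^1}_{a_1\ldots a_{k-1}l}|/|\Delta^{\bar O^1}_{a_1\ldots a_{k-1}}|=(q_{k-1}+1)\big/\big((q_{k-1}+l)(q_{k-1}+l+1)\big)$, where $q_{k-1}=a_1+\dots+a_{k-1}$, and summing the telescoping series over the discarded digits $l\in\{1,\dots,3^k\}$, I obtain
$$L_k=\frac1{\lambda(F_{k-1})}\sum_{\substack{a_j\in V_j,\ 1\le j\le k-1}}|\Delta^{\bar O^1}_{a_1\ldots a_{k-1}}|\cdot\frac{3^k}{q_{k-1}+3^k+1}.$$

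The key step I would then carry out is that a Lebesgue-definite fraction of $F_{k-1}$ is carried by cylinders on which $q_{k-1}$ is of its expected order $3^{k-1}$: there exist absolute constants $C$ and $\varepsilon>0$ such that $\lambda\big(F_{k-1}\cap\{x:q_{k-1}(x)\le C\cdot3^{k-1}\}\big)\ge\varepsilon\,\lambda(F_{k-1})$ for every $k$. Granting this, on that part of $F_{k-1}$ the factor $\frac{3^k}{q_{k-1}+3^k+1}$ is at least $\frac{3^k}{3^{k-1}(C+3)+1}\ge\frac3{C+4}$, so $L_k\ge\frac{3\varepsilon}{C+4}$ for all $k$; hence $\sum_kL_k=+\infty$ and $\lambda(C[\bar O^1,\{V_k\}])=0$.

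The main obstacle is precisely that uniform lower bound for the measure of $F_{k-1}$ restricted to $\{q_{k-1}\le C\cdot3^{k-1}\}$; it concerns the law of the $\bar O^1$-partial denominator $q_{k-1}$ conditioned on $F_{k-1}$, and it is delicate because the $\bar O^1$-increments are heavy-tailed — conditionally on the past, $P(g_j\ge l\mid q_{j-1})=(q_{j-1}+1)/(q_{j-1}+l)\sim q_{j-1}/l$, so each increment has infinite conditional mean and $q_{k-1}$ cannot be controlled through its expectation. What makes the bound plausible is that on $F_{k-1}$ one automatically has $q_{k-1}\ge\sum_{j<k}(v_j+1)\asymp3^{k-1}$, so $q_{k-1}$ already lives at the right scale, and one then has to show it overshoots $C\cdot3^{k-1}$ only with conditional probability bounded away from $1$, uniformly in $k$, once $C$ is large. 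I would obtain this either from the metric theory of the first Ostrogradsky expansion and the known growth rate of its partial denominators (\cite{ABPT,ABPT2}), or directly by a recursion bounding $P(q_{k-1}>C\cdot3^{k-1}\mid F_{k-1})$ in terms of the analogous probability at level $k-2$, keeping track of the bounded multiplicative distortion between the measures restricted to $F_{k-2}$ and to $F_{k-1}$; executing that recursion carefully is where the real effort goes.
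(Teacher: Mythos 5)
Your treatments of \eqref{Oc2lMLc.f.} and \eqref{Oc2MLO_2} are correct and are exactly the paper's arguments (the a.e.\ positivity of the frequency of the partial quotient $1$, see \eqref{frequency 1 for c.f.}, and Theorem \ref{MnVk} applied with $v_k=3^k$, $b=2$). The problem is \eqref{Oc2MLO_1}: your argument is incomplete, and the step you defer is precisely where all of the difficulty is concentrated. The uniform estimate $\lambda\bigl(F_{k-1}\cap\{q_{k-1}\le C\cdot 3^{k-1}\}\bigr)\ge\varepsilon\,\lambda(F_{k-1})$ is asserted, not proved, and neither of your two proposed sources supplies it. The known metric fact for the first Ostrogradsky (Pierce) expansion is Shallit's theorem that $\sqrt[n]{q_n(x)}\to e$ for Lebesgue--a.e.\ $x\in[0,1]$; this is an almost-everywhere statement for Lebesgue measure on the whole interval and gives no uniform control of the conditional law of $q_{k-1}$ under normalized Lebesgue measure on the shrinking sets $F_{k-1}$ (whose measure tends to zero --- indeed that is what you are trying to prove). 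As for the proposed recursion: conditioned on the past and on $g_j>3^j$, the overshoot $q_j/(q_{j-1}+3^j)$ has an approximately Pareto tail $\sim 1/t$, so $\log\bigl(q_k/3^k\bigr)$ behaves like a random walk whose increments have mean about $1-\log 3$; the tightness of $q_{k-1}/3^{k-1}$ that you need holds only because $\log 3>1$, i.e.\ $3>e$. Your sketch never invokes $3>e$; with $V_k=\{2^k+1,2^k+2,\dots\}$ it would read word for word the same, yet there the log-drift is $1-\log 2>0$ and the claimed uniform bound fails. So the essential idea behind \eqref{Oc2MLO_1} is missing, not merely a routine verification.

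The paper's own proof avoids the conditional-measure problem entirely and is much shorter: from Shallit's result one deduces (Lemma \ref{Lemma1}) that $\limsup_n\sqrt[n]{g_n(x)}\le e$ for Lebesgue--a.e.\ $x$, while every $x\in C[\bar{O}^1,\{V_k\}]$ satisfies $g_k(x)>3^k$, hence $\limsup_k\sqrt[k]{g_k(x)}\ge 3>e$ (Lemma \ref{Lemma2}); therefore $C[\bar{O}^1,\{V_k\}]$ is contained in a null set. If you wish to keep your product-formula strategy (your cylinder-ratio computation and the telescoping sum giving $3^k/(q_{k-1}+3^k+1)$ are correct), you must actually prove the tightness of $q_{k-1}/3^{k-1}$ under the conditional measures on $F_{k-1}$, making the comparison $\log 3>1$ explicit; until then \eqref{Oc2MLO_1} is not established.
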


\begin{proof}
Equality (\ref{Oc2lMLc.f.}) follows from the proof of the previous proposition and inequality (\ref{Oc2MLO_2}) is a  direct consequence of the latter theorem.

To prove (\ref{Oc2MLO_1}), let us prove two auxiliary lemmas, which have a self standing interest.
\begin{lemma}\label{Lemma1}
Let $\bar{O}_{g_{1}(x)g_{2}(x)\ldots g_{n}(x)\ldots}^{1}$ be the Ostrogradskyi-Pierce expansion in the difference form, $g_{n}(x)\in N.$

Then for $\lambda$-almost all $x\in[0,1]$ one has
\begin{equation}\label{VGr}
\overline{\lim_{n\rightarrow\infty}}\sqrt[n]{g_{n}(x)} \leq e
\end{equation}
\end{lemma}
\begin{proof}
In \cite{Sha86} it has been proven that for the standard Ostrogradskyi-Pierce expansion $\bar{O}_{q_{1}(x)q_{2}(x)\ldots q_{n}(x)\ldots}~(q_{n+1}>q_{n})$ for almost all (in the sense of Lebesgue measure) $x\in[0,1]$ one has:
\begin{equation}\label{Gr}
\lim_{n\rightarrow\infty}\sqrt[n]{q_{n}(x)}=e,
\end{equation}
where $q_{n}(x)=g_{1}(x)+g_{2}(x)+\ldots+g_{n}(x).$

From (\ref{Gr}) it follows that $\forall \varepsilon>0~\exists~N(\varepsilon,x)>0:~~\forall n>N(\varepsilon,x)$
$$(e-\varepsilon)^{n}<q_{n}(x)<(e+\varepsilon)^{n} $$
$$(e-\varepsilon)^{n+1}<q_{n+1}(x)<(e+\varepsilon)^{n+1} $$
So, $g_{n}(x)=q_{n+1}(x)-q_{n}(x)<(e+\varepsilon)^{n+1}-(e-\varepsilon)^{n}<(e+\varepsilon)^{n+1}.$
\\Therefore, $$\sqrt[n]{g_{n}(x)}<(e+\varepsilon)^{1+\frac{1}{n}},~\forall n>N(\varepsilon,x),$$
and, hence, $$\overline{\lim_{n\rightarrow\infty}}\sqrt[n]{g_{n}(x)}\leq e ,$$
which proves the lemma.
\end{proof}
\begin{lemma}\label{Lemma2}
If
\begin{equation}\label{MnV_k1}
V_{k}=\{v_{k}+l, l \in N\}
\end{equation}
and
\begin{equation}\label{NGr}
\underline{\lim}_{k\rightarrow\infty} \sqrt[k]{v_{k}}>e,
\end{equation}
then $$\lambda(C[\bar{O}^1,\{V_k\}])=0.$$
\end{lemma}
\begin{proof}
From (\ref{MnV_k1}) it follows that $g_{k}(x)>v_{k},~\forall x\in C[\bar{O}^{1},\{V_{k}\}].$
Therefore, $$\overline{\lim_{k\rightarrow\infty}}\sqrt[k]{g_{k}(x)}\geq \underline{\lim}_{k\rightarrow\infty}\sqrt[k]{v_{k}}>e,~\forall x\in C[\bar{O}^{1},\{V_{k}\}]. $$
Taking into account the latter lemma, we get
$$ \lambda\left(C[\bar{O}^{1},\{V_{k}\}]\right)=0.$$
\end{proof}
So, to show inequality (\ref{Oc2MLO_1}), it is enough to apply lemma \ref{Lemma2} with $v_{k}=3^{k}.$
\end{proof}

\textbf{Remark.} Proposition \ref{Proposition 1} shows essential differences between the metric theory of continued fractions and the metric theory of the $\bar{O}^{2}$-expansion. At the same time this proposition shows some level of similarity between $\bar{O}^{1}$- and $\bar{O}^{2}$-expansions of real numbers. Indeed, from  (\ref{Oc1MLO_1}) and (\ref{Oc1MLO_2}) it follows that the deleting of any finite number of digits from the alphabet does not affect the positivity of the Lebesgue measure of the set $C[\bar{O}^{1},\{V_{k}\}]$, and therefore, does not change the Hausdorff dimension of this set. Moreover, both the mapping $f_1:   E \rightarrow E \bigcap C[\bar{O}^{1},\{1,2,...,m\}] $ and the mapping $f_2:   E \rightarrow E \bigcap C[\bar{O}^{2},\{1,2,...,m\}] $ do not change the Hausdorff dimension of any subset $E \subset [0,1]$.

On the other hand Proposition \ref{Proposition 2} demonstrates obvious    differences in the metric theories of  $\bar{O}^{1}$- and $\bar{O}^{2}$-expansions. To stress these differences, let us mention that the transformations $T_{\bar{O}^{2} \rightarrow \bar{O}^{1}}$ and $T_{\bar{O}^{2} \rightarrow c.f.}$ are strictly increasing \textbf{singularly } continuous functions on the unit interval, where
\begin{equation} 
  T_{\bar{O}^{2} \rightarrow \bar{O}^{1}}(\Delta_{g_{1}(x)g_{2}(x)\ldots g_{k}(x)...}^{\bar{O^{2}}}) = \Delta_{g_{1}(x)g_{2}(x)\ldots g_{k}(x)...}^{\bar{O^{1}}},
\end{equation}
and
\begin{equation} 
  T_{\bar{O}^{2} \rightarrow c.f. }(\Delta_{g_{1}(x)g_{2}(x)\ldots g_{k}(x)...}^{\bar{O^{2}}}) = \Delta_{g_{1}(x)g_{2}(x)\ldots g_{k}(x)...}^{c.f.}.
\end{equation}
Actually the singularity of the function $T_{\bar{O}^{2} \rightarrow \bar{O}^{1}}$ follows from  (\ref{Oc2MLO_1}) and (\ref{Oc2MLO_2}). The singularity  of the function $T_{\bar{O}^{2} \rightarrow c.f.}$ follows from (\ref{frequency 1 for c.f.}) and from the fact that for $\lambda$-almost all $x \in [0,1]$ the asymptotic frequency of the digit $1$ in the $\bar{O}^{2}$-expansion of $x$ is equal to zero.

So, the continued fraction expansion, $\bar{O}^{1}$-expansion and $\bar{O}^{2}$-expansion are "mutually orthogonal", and their metric theories differ essentially.


b) Now let us consider the case where  \begin{equation}\label{V_k= 1,...,m_k}
  V_{k}=\{1,2,\ldots,m_{k}\}
\end{equation} and $\{m_k\}$ is an arbitrary sequence of positive integers.

Let $M_{1}:=1, $ and
$$M_{k+1}=(M_{k}+1)^{2}+m_{k+1},~\forall k\in N. $$
\begin{theorem}
If $\sum\limits_{k=1}^{\infty}\frac{M_{k}^{2}}{m_{k+1}}<+\infty,$
then  $\lambda\left(C[\bar{O}^{2},\{V_{k}\}]\right)>0.$
\end{theorem}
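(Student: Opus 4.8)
The plan is to follow the same template as in Theorem \ref{MnVk}, replacing the "tail set" $V_k = \{v_k+l\}$ by the "head set" $V_k = \{1,2,\dots,m_k\}$ and controlling the corresponding complementary ratios. The key object is again $\frac{\lambda(\overline{F}_{k+1})}{\lambda(F_k)}$, and by Corollary \ref{kryterij dodatnosti miry lebega} it suffices to show that $\sum_k \frac{\lambda(\overline{F}_{k})}{\lambda(F_{k-1})}$ converges. Here $\overline{F}_{k+1}$ consists of those $x$ whose first $k$ symbols lie in $V_1,\dots,V_k$ but whose $(k{+}1)$-st symbol satisfies $d_{k+1}(x) > m_{k+1}$. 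So, conditionally on a fixed admissible base $a_1 a_2\dots a_k$, I must estimate $\sum_{j > m_{k+1}} |\Delta^{\bar O^2}_{a_1\dots a_k j}| \big/ |\Delta^{\bar O^2}_{a_1\dots a_k}|$.

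First I would translate into $O^2$-coordinates. By the dictionary $a_1 = c_1$, $a_i = c_i + 1 - c_{i-1}(c_{i-1}+1)$, the condition $a_i \le m_i$ for $i=1,\dots,k$ together with the recursion $M_1 = 1$, $M_{k+1} = (M_k+1)^2 + m_{k+1}$ forces $c_k \le M_k$ for all $k \le n$: indeed $c_1 = a_1 \le m_1 \le M_1$ would need $M_1 \ge m_1$, so one actually takes $M_1$ large enough — more precisely one verifies by induction that $c_k \le M_k$, since $c_{k+1} = c_k(c_k+1) - 1 + a_{k+1} \le M_k(M_k+1) - 1 + m_{k+1} \le (M_k+1)^2 + m_{k+1} = M_{k+1}$. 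Now by Lemma \ref{lemma relation of cyl}, for the "bad" symbol $j$ the ratio $\frac{|\Delta^{\bar O^2}_{a_1\dots a_k j}|}{|\Delta^{\bar O^2}_{a_1\dots a_k}|}$ equals $\frac{c_k(c_k+1)}{(c_k(c_k+1)+j-1)(c_k(c_k+1)+j)}$, so summing over $j > m_{k+1}$ telescopes to
$$
\sum_{j > m_{k+1}} \frac{|\Delta^{\bar O^2}_{a_1\dots a_k j}|}{|\Delta^{\bar O^2}_{a_1\dots a_k}|}
= \frac{c_k(c_k+1)}{c_k(c_k+1) + m_{k+1}}
\le \frac{c_k(c_k+1)}{m_{k+1}}
\le \frac{M_k(M_k+1)}{m_{k+1}}
\le \frac{2M_k^2}{m_{k+1}}.
$$
Since this bound is uniform over all admissible bases $a_1\dots a_k$, averaging against $|\Delta^{\bar O^2}_{a_1\dots a_k}|$ and summing over admissible bases gives $\frac{\lambda(\overline{F}_{k+1})}{\lambda(F_k)} \le \frac{2M_k^2}{m_{k+1}}$.

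Finally, $\sum_{k} \frac{\lambda(\overline{F}_{k+1})}{\lambda(F_k)} \le 2\sum_k \frac{M_k^2}{m_{k+1}} < +\infty$ by hypothesis, so Corollary \ref{kryterij dodatnosti miry lebega} yields $\lambda(C[\bar O^2,\{V_k\}]) = \prod_{k}\bigl(1 - \frac{\lambda(\overline F_k)}{\lambda(F_{k-1})}\bigr) > 0$. The main technical point to get right is the inductive bound $c_k \le M_k$ — i.e., that the "head" restriction on $\bar O^2$-symbols translates into a polynomial (doubly-exponential-type) growth ceiling on the $O^2$-digits, which is exactly what the recursion defining $M_k$ encodes; everything else is the telescoping sum and the Borel–Cantelli-flavoured product argument already used in Theorem \ref{MnVk}. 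One should also remark (as in the proof of Theorem \ref{MnVk}, where an initial cylinder $\Delta_c^{\bar O^2}$ was fixed) whether one needs to restrict to a sub-cylinder to start the induction cleanly; here, since $M_1$ and the first few $m_k$ can be absorbed into the convergent tail, no such restriction is needed.
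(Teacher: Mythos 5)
Your proposal is correct and follows essentially the same route as the paper: fix an admissible cylinder, telescope the tail sum over $j>m_{k+1}$ to get the exact ratio $\frac{c_k(c_k+1)}{c_k(c_k+1)+m_{k+1}}$, bound $c_k\le M_k$ inductively via the recursion $M_{k+1}=(M_k+1)^2+m_{k+1}$, and conclude through Corollary~\ref{kryterij dodatnosti miry lebega}; your direct estimate $\le 2M_k^2/m_{k+1}$ merely shortcuts the paper's equivalence-of-series step. The base-case issue you flag (with $M_1:=1$ one needs $c_1\le M_1$, which really asks for $M_1\ge m_1$) is present in the paper's own argument as well, so it is a shared reading of the definition of $M_1$ rather than a gap specific to your proof.
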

\begin{proof}
Let  $\Delta_{a_{1}\ldots a_{k}}^{\bar{O}^{2}}$ be an arbitrary cylinder of rank $k$ with $a_j \in V_j, \forall j \in \{1,2,...,k\}$. Then
$$\lambda\left(\overline{F}_{k+1}\bigcap\Delta_{a_{1}\ldots a_{k}}^{\bar{O}^{2}} \right)= \sum_{i=m_{k+1}+1}^{\infty} |\Delta_{a_{1}\ldots a_{k} i}^{\bar{O}^{2}}| =
\frac{1}{c_{k}(c_{k}+1)+m_{k+1}}, $$
where  $c_{k}$ can be calculated via $a_{1},a_{2},\ldots,a_{k}$ : $c_{i+1}= c_i (c_i+1)-1+a_{i+1}$ with $c_1=a_1$.
It is clear that
$$\lambda\left(F_{k}\bigcap\Delta_{a_{1}\ldots a_{k}}^{\bar{O}^{2}} \right)=\lambda\left(\Delta_{a_{1}\ldots a_{k}}^{\bar{O}^{2}}\right)=
\frac{1}{c_{k}(c_{k}+1)},~a_{j}\in V_{j},j=\overline{1,k}.$$
Therefore $$\frac{\lambda\left(\overline{F}_{k+1}\bigcap\Delta_{a_{1}\ldots a_{k}}^{\bar{O}^{2}} \right)}{\lambda\left(F_{k}\bigcap\Delta_{a_{1}\ldots a_{k}}^{\bar{O}^{2}} \right)}=
\frac{\frac{1}{c_{k}(c_{k}+1)+m_{k+1}}}{\frac{1}{c_{k}(c_{k}+1)}}=\frac{c_{k}(c_{k}+1)}{c_{k}(c_{k}+1)+m_{k+1}}.$$
From (\ref{V_k= 1,...,m_k}) it follows that $$1\leq c_{1}\leq m_{1},$$
$$2\leq c_{2}\leq c_{1}(c_{1}+1)-1+m_{2}\leq (m_{1}+1)^{2}+m_{2},$$
$$2\cdot 3 \leq c_{3}\leq c_{2}(c_{2}+1)-1+m_{3}\leq (c_{2}+1)^{2}+m_{3}=\left((m_{1}+1)^{2}+m_{2}\right)^{2}+m_{3},$$
$$\ldots\ldots\ldots\ldots\ldots\ldots\ldots\ldots\ldots\ldots\ldots\ldots $$
$$ {2^{2}}^{k-2}\leq c_{k}\leq (((m_{1}+1)^{2}+m_{2})^{2}+\ldots+m_{k-1})^{2}+m_{k}, ...$$
So,$${2^{2}}^{k-2}\leq c_{k}\leq M_{k}, \forall k \in N.$$
Therefore
 \begin{equation}\label{Oc}
 \frac{{2^{2}}^{k-2}({2^{2}}^{k-2}+1)}{{2^{2}}^{k-2}({2^{2}}^{k-2}+1)+m_{k+1}}\leq \frac{\lambda\left(\overline{F}_{k+1}\bigcap\Delta_{a_{1}\ldots a_{k}}^{\bar{O}^{2}} \right)}{\lambda\left(F_{k}\bigcap\Delta_{a_{1}\ldots a_{k}}^{\bar{O}^{2}} \right)}\leq \frac{M_{k}(M_{k}+1)}{M_{k}(M_{k}+1)+m_{k+1}}.
 \end{equation}
Since the estimation (\ref{Oc}) holds for any cylinder $\Delta_{a_{1}\ldots a_{k}}^{\bar{O}^{2}},~a_{j}\in V_{j}, j=\overline{1,k},$ we get
$$\frac{{2^{2}}^{k-1}}{{2^{2}}^{k-1}+m_{k+1}}\leq \frac{\lambda(\overline{F}_{k+1})}{\lambda(F_{k})}\leq \frac{(M_{k}+1)^{2}}{(M_{k}+1)^{2}+m_{k+1}}.$$
If $\sum\limits_{k=1}^{\infty}\frac{(M_{k}+1)^{2}}{(M_{k}+1)^{2}+m_{k+1}}<+\infty,$ then
$\sum\limits_{k=1}^{\infty}\frac{\lambda(\overline{F}_{k+1})}{\lambda(F_{k})}<+\infty $, and taking into account corollary \ref{kryterij dodatnosti miry lebega} we deduce that  $\lambda\left(C[\bar{O}^{2},V_{k}]\right)> 0.$
One can easily verify that $$\frac{1}{1+\frac{m_{k+1}}{M_{k}^{2}}}=\frac{M_{k}^{2}}{M_{k}^{2}+m_{k+1}}\leq \frac{(M_{k}+1)^{2}}{(M_{k}+1)^{2}+m_{k+1}}=\frac{\left(\frac{M_{k}+1}{M_{k}}\right)^{2}\cdot M_{k}^{2}}{\left(\frac{M_{k}+1}{M_{k}}\right)^{2}\cdot M_{k}^{2}+m_{k+1}}\leq $$
$$\leq \frac{4 M_{k}^{2}}{4 M_{k}^{2}+m_{k+1}}\leq \frac{1}{1+\frac{m_{k+1}}{4M_{k}^{2}}}\leq \frac{4M_{k}^{2}}{m_{k+1}}.$$
So, $$\sum_{k=1}^{\infty}\frac{(M_{k}+1)^{2}}{(M_{k}+1)^{2}+m_{k+1}}<+\infty~\Leftrightarrow~
\sum_{k=1}^{\infty}\frac{M_{k}^{2}}{m_{k+1}}<+\infty.$$
Therefore the convergence of the series $\sum\limits_{k=1}^{\infty}\frac{M_{k}^{2}}{m_{k+1}}$ implies the positivity of the Lebesgue measure of the set $C[\bar{O}^{2},\{V_{k}\}].$
\end{proof}
\begin{theorem}
If $\sum\limits_{k=1}^{\infty}\frac{{2^{2}}^{k-1}}{{2^{2}}^{k-1}+m_{k+1}}=+\infty,$ then $\lambda\left(C[\bar{O}^{2},\{V_{k}\}]\right)= 0.$
\end{theorem}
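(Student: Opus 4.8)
The plan is to establish the converse direction of the previous theorem, obtaining a divergence criterion that forces $\lambda(C[\bar{O}^{2},\{V_k\}])=0$. The natural strategy mirrors the structure of the positivity proof: use Corollary~\ref{kryterij dodatnosti miry lebega}, which tells us that $\lambda(C[\bar{O}^2,\{V_k\}])=0$ is equivalent to $\sum_{k=1}^{\infty}\frac{\lambda(\overline{F}_{k})}{\lambda(F_{k-1})}=\infty$. So it suffices to show that the hypothesis $\sum_{k=1}^{\infty}\frac{{2^{2}}^{k-1}}{{2^{2}}^{k-1}+m_{k+1}}=+\infty$ implies $\sum_{k=1}^{\infty}\frac{\lambda(\overline{F}_{k+1})}{\lambda(F_{k})}=\infty$.

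The key computational input is already available from the proof of the previous theorem: for every cylinder $\Delta_{a_1\ldots a_k}^{\bar{O}^2}$ with $a_j\in V_j$ one has the two-sided estimate
\[
\frac{{2^{2}}^{k-2}\left({2^{2}}^{k-2}+1\right)}{{2^{2}}^{k-2}\left({2^{2}}^{k-2}+1\right)+m_{k+1}}\leq \frac{\lambda\left(\overline{F}_{k+1}\cap\Delta_{a_{1}\ldots a_{k}}^{\bar{O}^{2}}\right)}{\lambda\left(F_{k}\cap\Delta_{a_{1}\ldots a_{k}}^{\bar{O}^{2}}\right)}\leq \frac{M_{k}(M_{k}+1)}{M_{k}(M_{k}+1)+m_{k+1}},
\]
and in particular the lower bound, after summing the numerators and denominators over all admissible cylinders of rank $k$ and using $c_k\geq {2^{2}}^{k-2}$ so that $c_k(c_k+1)\geq {2^{2}}^{k-1}$, yields
\[
\frac{\lambda(\overline{F}_{k+1})}{\lambda(F_{k})}\geq \frac{{2^{2}}^{k-1}}{{2^{2}}^{k-1}+m_{k+1}}.
\]
First I would simply quote this lower bound (it is derived verbatim in the previous proof, so no new calculation is needed), and then sum over $k$: the divergence hypothesis is exactly the divergence of the right-hand side, hence $\sum_{k}\frac{\lambda(\overline{F}_{k+1})}{\lambda(F_{k})}=\infty$, and Corollary~\ref{kryterij dodatnosti miry lebega} gives $\lambda(C[\bar{O}^{2},\{V_k\}])=0$.

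I do not expect a serious obstacle here, since this theorem is the mirror image of the preceding one and all the needed cylinder estimates have already been proved; the only care point is making sure that the lower bound $\frac{\lambda(\overline{F}_{k+1})}{\lambda(F_{k})}\geq \frac{{2^{2}}^{k-1}}{{2^{2}}^{k-1}+m_{k+1}}$ survives the passage from individual cylinders to the union — but this is immediate because if $\frac{\lambda(\overline{F}_{k+1}\cap\Delta)}{\lambda(F_k\cap\Delta)}\geq t_k$ for every rank-$k$ cylinder $\Delta$ contained in $F_k$, then summing gives $\frac{\lambda(\overline{F}_{k+1})}{\lambda(F_k)}\geq t_k$ as well. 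Here is the argument:

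\begin{proof}
By Corollary~\ref{kryterij dodatnosti miry lebega}, it suffices to prove that
\[
\sum_{k=1}^{\infty}\frac{\lambda(\overline{F}_{k+1})}{\lambda(F_{k})}=+\infty.
\]
From the proof of the previous theorem, for every cylinder $\Delta_{a_{1}\ldots a_{k}}^{\bar{O}^{2}}$ with $a_{j}\in V_{j}$, $j=\overline{1,k}$, one has $c_{k}\geq {2^{2}}^{k-2}$ and hence $c_{k}(c_{k}+1)\geq {2^{2}}^{k-1}$, so
\[
\frac{\lambda\left(\overline{F}_{k+1}\cap\Delta_{a_{1}\ldots a_{k}}^{\bar{O}^{2}}\right)}{\lambda\left(F_{k}\cap\Delta_{a_{1}\ldots a_{k}}^{\bar{O}^{2}}\right)}
=\frac{c_{k}(c_{k}+1)}{c_{k}(c_{k}+1)+m_{k+1}}\geq \frac{{2^{2}}^{k-1}}{{2^{2}}^{k-1}+m_{k+1}},
\]
since the function $t\mapsto \frac{t}{t+m_{k+1}}$ is increasing for $t>0$. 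Summing the numerators and the denominators over all cylinders $\Delta_{a_{1}\ldots a_{k}}^{\bar{O}^{2}}$ with $a_{j}\in V_{j}$ (whose union over the bases is $F_{k}$, while the corresponding rank-$(k+1)$ parts assemble to $\overline{F}_{k+1}$), we obtain
\[
\frac{\lambda(\overline{F}_{k+1})}{\lambda(F_{k})}\geq \frac{{2^{2}}^{k-1}}{{2^{2}}^{k-1}+m_{k+1}}.
\]
Therefore
\[
\sum_{k=1}^{\infty}\frac{\lambda(\overline{F}_{k+1})}{\lambda(F_{k})}\geq \sum_{k=1}^{\infty}\frac{{2^{2}}^{k-1}}{{2^{2}}^{k-1}+m_{k+1}}=+\infty
\]
by hypothesis, and consequently $\sum_{k=1}^{\infty}\frac{\lambda(\overline{F}_{k})}{\lambda(F_{k-1})}=+\infty$. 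By Corollary~\ref{kryterij dodatnosti miry lebega} this gives $\lambda\left(C[\bar{O}^{2},\{V_{k}\}]\right)=0$, which proves the theorem.
\end{proof}
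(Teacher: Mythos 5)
Your proposal is correct and follows essentially the same route as the paper: it invokes the lower bound from the estimate established in the preceding theorem's proof (via $c_k\geq 2^{2^{k-2}}$, hence $c_k(c_k+1)\geq 2^{2^{k-1}}$), passes from individual rank-$k$ cylinders to $\frac{\lambda(\overline{F}_{k+1})}{\lambda(F_k)}$ by summing numerators and denominators, and concludes with the zero-measure criterion of Corollary~\ref{kryterij dodatnosti miry lebega}. Your explicit justification of the passage from cylinders to the union and of the harmless index shift is a welcome bit of extra care, but the argument is the same as the paper's.
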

\begin{proof}
Using the estimation (\ref{Oc}) we have
$$\frac{\lambda\left(\overline{F}_{k+1}\bigcap\Delta_{a_{1}\ldots a_{k}}^{\bar{O}^{2}} \right)}{\lambda\left(F_{k}\bigcap\Delta_{a_{1}\ldots a_{k}}^{\bar{O}^{2}} \right)}\geq \frac{{2^{2}}^{k-1}}{{2^{2}}^{k-1}+m_{k+1}} $$
for all cylinders $\Delta_{a_{1}\ldots a_{k}}^{\bar{O}^{2}},~a_{j}\in V_{j}, j=\overline{1,k}.$

Therefore $$\frac{\lambda\left(\overline{F}_{k+1}\right)}{\lambda(F_{k})}\geq \frac{{2^{2}}^{k-1}}{{2^{2}}^{k-1}+m_{k+1}}, ~\forall k\in N.$$
If $\sum\limits_{k=1}^{\infty}\frac{{2^{2}}^{k-1}}{{2^{2}}^{k-1}+m_{k+1}}=+\infty,$ then, applying corollary \ref{kryterij dodatnosti miry lebega}, we get $\lambda\left(C[\bar{O}^{2},\{V_{k}\}]\right)= 0,$ which proves the Theorem
\end{proof}

\begin{proposition}\label{proposition 4}
  Let $V_k = \{1,2,..., 2^{2^{k-1}}\}.$
   Then \begin{equation}\label{Oc4MLO_1}
\lambda(C[\bar{O}^1,\{V_k\}])>0;
\end{equation}
\begin{equation}\label{Oc4lMLc.f.}
 \lambda(C[c.f.,\{V_k\}])>0;
\end{equation}
\begin{equation}\label{Oc4MLO_2}
\lambda(C[\bar{O}^2,\{V_k\}])=0.
\end{equation}
\end{proposition}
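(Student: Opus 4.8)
The plan is to obtain all three assertions from the criterion of Corollary \ref{kryterij dodatnosti miry lebega}. That criterion, and the Lemma preceding it, rest only on the abstract identity $\lambda(C)=\lim_k\lambda(F_k)=\prod_k\lambda(F_k)/\lambda(F_{k-1})$, so they transfer verbatim to the analogous Cantor constructions attached to the $\bar{O}^1$- and continued‑fraction expansions. Everything thus reduces to bounding, for each of the three expansions and each cylinder $\Delta$ of $F_k$, the relative mass $\lambda(\overline F_{k+1}\cap\Delta)/\lambda(\Delta)$ removed at the $(k+1)$-st step, and summing over $k$.

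I would dispose of the two positivity statements first. For (\ref{Oc4lMLc.f.}): by the Remark after Lemma \ref{lemma relation of cyl} one has $|\Delta^{c.f.}_{a_1\dots a_k j}|/|\Delta^{c.f.}_{a_1\dots a_k}|\le j^{-2}$ independently of $a_1,\dots,a_k$, so the $(k+1)$-st deleted fraction is at most $\sum_{j>2^{2^{k}}}j^{-2}<2^{-2^{k}}$; since $\sum_k 2^{-2^{k}}<\infty$, the criterion gives $\lambda(C[c.f.,\{V_k\}])>0$. For (\ref{Oc4MLO_1}) the same scheme works for the first Ostrogradsky expansion: along the construction $c_k=g_1+\dots+g_k\le\sum_{i\le k}2^{2^{i-1}}<2\cdot 2^{2^{k-1}}$, and a computation with the corresponding cylinders gives the deleted fraction $\le (c_k+1)/(c_k+\max V_{k+1}+1)\le 3\cdot 2^{-2^{k-1}}$, which is again summable; hence $\lambda(C[\bar{O}^1,\{V_k\}])>0$. (Alternatively this follows from Theorem~5 of \cite{ABPT}, or from Lemma \ref{Lemma1}, which already forces $g_n(x)\le 3^n<\max V_n$ for all large $n$ and a.e.\ $x$.)

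The real content is (\ref{Oc4MLO_2}). Here the key input is the sharp lower bound $c_k\ge 2^{2^{k-2}}$ extracted in the proof of Lemma \ref{Lemma ocinka vidnoshennya cylindriv} (from $c_k>c_{k-1}^2>\dots>c_2^{2^{k-2}}\ge 2^{2^{k-2}}$), which makes $c_k(c_k+1)\ge 2^{2^{k-1}}$ and hence, for every rank-$k$ cylinder $\Delta$ inside $F_k$,
\[
\frac{\lambda(\overline F_{k+1}\cap\Delta)}{\lambda(\Delta)}=\frac{c_k(c_k+1)}{c_k(c_k+1)+\max V_{k+1}}\ \ge\ \frac{2^{2^{k-1}}}{2^{2^{k-1}}+\max V_{k+1}}.
\]
Thus $\sum_k\lambda(\overline F_{k+1})/\lambda(F_k)\ge\sum_k 2^{2^{k-1}}/(2^{2^{k-1}}+\max V_{k+1})$, and Corollary \ref{kryterij dodatnosti miry lebega} will give $\lambda(C[\bar{O}^2,\{V_k\}])=0$ as soon as this last series is shown to diverge — which is exactly the hypothesis of the last theorem of this section, so that that theorem can be invoked directly. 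I expect the divergence verification to be the only non‑routine step, and the delicate one: it takes place precisely at the doubly‑exponential scale at which $\bar{O}^2$-cylinders contract (Lemma \ref{Lemma ocinka vidnoshennya cylindriv}), so the estimate has to be kept sharp — $c_k\to\infty$ alone would not suffice, the explicit rate $c_k\ge 2^{2^{k-2}}$ is needed — and it is exactly here that the $\bar{O}^2$-expansion parts company with the continued‑fraction and $\bar{O}^1$ cases, where the analogous series converges and the corresponding sets retain positive Lebesgue measure. Exhibiting this dichotomy for one and the same sequence $\{V_k\}$ is the whole point of the Proposition.
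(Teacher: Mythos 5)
Your handling of (\ref{Oc4MLO_1}) and (\ref{Oc4lMLc.f.}) is correct and essentially the paper's own route: the continued-fraction part is the same tail estimate the paper uses, and for the $\bar O^1$-case the paper simply cites the criterion $\sum_k\frac{m_1+\cdots+m_k}{m_{k+1}}<\infty$ from \cite{BPT07}, which your direct cylinder estimate re-derives (only your parenthetical alternative via Lemma \ref{Lemma1} is loose: an almost-everywhere statement about all sufficiently large digits does not by itself give positive measure to the set where the restriction holds for \emph{every} $n$).

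The genuine gap is in (\ref{Oc4MLO_2}), exactly at the step you deferred: the divergence you need is false. Here $\max V_{k+1}=m_{k+1}=2^{2^{k}}=\bigl(2^{2^{k-1}}\bigr)^{2}$, so
\[
\frac{2^{2^{k-1}}}{2^{2^{k-1}}+m_{k+1}}=\frac{1}{1+2^{2^{k-1}}},
\]
and the series converges; hence the zero-measure theorem you want to invoke does not apply, and keeping the bound $c_k\ge 2^{2^{k-2}}$ ``sharp'' cannot repair this. The obstruction is structural rather than a matter of constants: the removed proportion $\frac{c_k(c_k+1)}{c_k(c_k+1)+m_{k+1}}$ is \emph{smallest} on the cylinders with smallest $c_k$, and along the all-ones cylinder one has $c_{k+1}=c_k(c_k+1)\le 3^{2^{k-1}}$ (induction from $c_2=2$ and $c_{k+1}\le\tfrac32c_k^2$), so its removed proportion is at most $(3/4)^{2^{k-1}}$, which is summable. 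Consequently no estimate that is uniform over rank-$k$ cylinders can trigger the divergence criterion of Corollary \ref{kryterij dodatnosti miry lebega}; to conclude measure zero one would have to show that the Lebesgue mass of $F_k$ concentrates on cylinders with $c_k(c_k+1)$ comparable to $m_{k+1}$, whereas the cylinder lengths $1/(c_k(c_k+1))$ weight the mass in the opposite direction. You have not overlooked a hidden argument in the paper: its own proof of (\ref{Oc4MLO_2}) is the same one-line reduction (``a direct corollary of the latter theorem'') and fails for the same reason, so for this choice of $V_k$ the third equality should be regarded as unproved (and, in view of the above, doubtful), not as something your outline merely needs to polish.
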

\begin{proof}
 Equality  (\ref{Oc4MLO_2}) is a direct corollary of the latter theorem.

To prove inequality  (\ref{Oc4MLO_1}), let us remind (see, e.g., \cite{BPT07}) that the condition $\sum\limits_{k=1}^{\infty} \frac{m_1+m_2+...+m_k}{m_{k+1}} < +\infty$ implies the positivity of the Lebesgue measure of the set $C[\bar{O}^1,\{V_k\}]$ with  $V_k = \{1,2,..., m_k\}.$  Since the series $ \sum\limits_{k=1}^{\infty} \frac{m_1+m_2+...+m_k}{m_{k+1}} $ diverges for $m_k = 2^{2^{k-1}}$, we get (\ref{Oc4MLO_1}).

 To prove inequality  (\ref{Oc4lMLc.f.}), let us remind (see, e.g., \cite{Khi61/63}) that $$\frac{1}{3i^{2}}\leq \frac{|\Delta_{a_{1}\ldots a_{n}i}^{c.f.}|}{|\Delta_{a_{1}\ldots a_{n}}^{c.f.}|}\leq \frac{1}{i^{2}}.$$
 So,   $$ \frac{\sum\limits_{i \not \in V_{k+1}}|\Delta_{a_{1}\ldots a_{n}i}^{c.f.}|}{|\Delta_{a_{1}\ldots a_{n}}^{c.f.}|}\leq \sum\limits_{i=2^{2^{k-1}}+1}^{\infty} \frac{2}{i^{2}} < \frac{4}{2^{2^{k-2}}}, $$
 and  therefore $$\sum\limits_{k=1}^{\infty} \frac{\lambda(\overline{F}^{c.f.}_{k+1})}{\lambda(F^{c.f.}_{k})} \leq  \sum\limits_{k=1}^{\infty} \frac{4}{2^{2^{k-2}}} < +\infty,$$ which implies the positivity of the Lebesgue measure of the set $\lambda(C[c.f.,\{V_k\}])$.
\end{proof}

c) Finally let us consider the  case where both the set $V_k$ and the set $\overline{V}_k := N \setminus V_k$ are infinite for any $k_in N.$

\begin{theorem}
 Let $ V_{k}=N \setminus \{b_{1}^{(k)}, b_{2}^{(k)},\ldots,b_{m}^{(k)},\ldots\},$  where $\{b_{m}^{(k)}\}_{m=1}^{\infty}$ is an increasing sequence of positive integers $\forall k\in N$, and let for any  $k\in N$ exist a positive integer
$d_{k}\in N$ such that
\begin{equation}\label{Ym1}
b_{n+1}^{k}-b_{n}^{k}\leq d_{k},~\forall n\in N.
\end{equation}
Then if
\begin{equation}\label{Ym2}
 \sum\limits_{k=1}^{\infty}\frac{1}{b_{1}^{(k)}\cdot d_{k}}=+\infty,
 \end{equation}
 then $\lambda\left(C[\bar{O}^{2},\{V_{k}\}]\right)=0.$
\end{theorem}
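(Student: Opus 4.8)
The plan is to apply Corollary~\ref{kryterij dodatnosti miry lebega}: since $\lambda(C[\bar{O}^{2},\{V_k\}])=0$ is equivalent to the divergence of $\sum_k \lambda(\overline F_k)/\lambda(F_{k-1})$, it suffices to bound the ratios $\lambda(\overline F_{k+1})/\lambda(F_k)$ from below by a quantity comparable to $1/(b_1^{(k+1)}d_{k+1})$, and then to invoke hypothesis~\eqref{Ym2}.

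To obtain such a bound I would work cylinder by cylinder. Fix a cylinder $\Delta=\Delta_{a_1\ldots a_k}^{\bar{O}^{2}}$ with $a_j\in V_j$ for $j\le k$, and set $t:=c_k(c_k+1)$, where $c_k$ is determined from $a_1,\dots,a_k$ as above. Using the cylinder-length formulas of the previous sections (namely $|\Delta_{a_1\ldots a_k i}^{\bar{O}^{2}}|=\frac{1}{(t-1+i)(t+i)}$ and $\lambda(\Delta)=\frac1t$) together with the decomposition $\overline F_{k+1}\cap\Delta=\bigsqcup_{m\ge1}\Delta_{a_1\ldots a_k b_m^{(k+1)}}^{\bar{O}^{2}}$, one gets
$$
\frac{\lambda(\overline F_{k+1}\cap\Delta)}{\lambda(F_k\cap\Delta)}
= t\sum_{m=1}^\infty\frac{1}{(t-1+b_m^{(k+1)})(t+b_m^{(k+1)})}
\ \ge\ t\sum_{m=1}^\infty\frac{1}{(t+b_m^{(k+1)})^{2}}.
$$
Here the gap hypothesis~\eqref{Ym1} enters: telescoping gives $b_m^{(k+1)}\le b_1^{(k+1)}+(m-1)d_{k+1}$, so the last series is at least $\sum_{m\ge1}\bigl(t+b_1^{(k+1)}+(m-1)d_{k+1}\bigr)^{-2}$, which by comparison with the corresponding integral is at least $\frac{1}{d_{k+1}\,(t+b_1^{(k+1)})}$. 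Combining this with the elementary inequality $t+b_1^{(k+1)}\le 2\,b_1^{(k+1)}\,t$ (valid since $t\ge1$ and $b_1^{(k+1)}\ge1$) yields
$$
\frac{\lambda(\overline F_{k+1}\cap\Delta)}{\lambda(F_k\cap\Delta)}
\ \ge\ \frac{t}{d_{k+1}\,(t+b_1^{(k+1)})}
\ \ge\ \frac{1}{2\,b_1^{(k+1)}\,d_{k+1}},
$$
and the right-hand side does not depend on the chosen cylinder $\Delta$.

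Summing numerators and denominators over all rank-$k$ cylinders with digits in $V_1,\dots,V_k$ transfers the bound to $\lambda(\overline F_{k+1})/\lambda(F_k)\ge\bigl(2\,b_1^{(k+1)}\,d_{k+1}\bigr)^{-1}$ for every $k$. By~\eqref{Ym2} the series $\sum_k \bigl(2\,b_1^{(k+1)}\,d_{k+1}\bigr)^{-1}$ diverges, hence so does $\sum_k \lambda(\overline F_k)/\lambda(F_{k-1})$, and Corollary~\ref{kryterij dodatnosti miry lebega} gives $\lambda(C[\bar{O}^{2},\{V_k\}])=0$. The single step that needs genuine care is the lower estimate of $\sum_m (t+b_m^{(k+1)})^{-2}$: this is precisely where the boundedness of the gaps of the deleted set $\overline V_{k+1}$ is used, to guarantee that $\overline V_{k+1}$ is dense enough to carry a non-negligible share of the mass of each cylinder; everything else is routine bookkeeping with the cylinder-length formulas and Corollary~\ref{kryterij dodatnosti miry lebega}.
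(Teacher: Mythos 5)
Your argument is correct, and it reaches the conclusion by a somewhat different route than the paper, although both proofs live in the same framework: the zero-measure criterion of Corollary~\ref{kryterij dodatnosti miry lebega}, the explicit cylinder lengths $|\Delta_{a_{1}\ldots a_{k}i}^{\bar{O}^{2}}|=\frac{1}{(t-1+i)(t+i)}$ with $t=c_{k}(c_{k}+1)$, and the bounded-gap hypothesis \eqref{Ym1}. The paper estimates the deleted mass against the \emph{retained} mass block by block: each deleted cylinder $\Delta_{a_{1}\ldots a_{k-1}b_{n}^{(k)}}$ dominates, up to the factor $d_{k}$, the retained cylinders in the gap following it, while the initial block $\{1,\ldots,b_{1}^{(k)}-1\}$ is handled separately and produces the quantity $l_{k}=\bigl(1+\frac{b_{1}^{(k)}}{2^{2^{k-2}}}\bigr)b_{1}^{(k)}$; this gives $\frac{\lambda(\overline{F}_{k})}{\lambda(F_{k-1})}\geq\frac{1}{4l_{k}d_{k}}$ and then requires the final bookkeeping step (splitting the indices according to whether $b_{1}^{(k)}\leq 2^{2^{k-2}}$) to show that divergence of $\sum 1/(b_{1}^{(k)}d_{k})$ forces divergence of $\sum 1/(l_{k}d_{k})$. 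You instead bound the deleted mass in each admissible rank-$k$ cylinder directly against the whole cylinder, summing $t\sum_{m}\frac{1}{(t+b_{m}^{(k+1)})^{2}}$, using the telescoped gap bound $b_{m}^{(k+1)}\leq b_{1}^{(k+1)}+(m-1)d_{k+1}$ and an integral comparison to get $\frac{t}{d_{k+1}(t+b_{1}^{(k+1)})}\geq\frac{1}{2b_{1}^{(k+1)}d_{k+1}}$, a bound uniform in the cylinder and already expressed in terms of the quantity appearing in \eqref{Ym2}. This buys you a cleaner conclusion: no auxiliary quantity $l_{k}$, no comparison of two series at the end, and no case distinction on the size of $b_{1}^{(k)}$ relative to $2^{2^{k-2}}$; the paper's version, on the other hand, isolates the geometric mechanism (monotone decay of cylinder lengths in the last digit) in a form that it reuses elsewhere in the section. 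Each step of your estimate checks out, including the integral comparison $\sum_{j\geq 0}(A+jd)^{-2}\geq\frac{1}{dA}$ and the elementary inequality $t+b\leq 2bt$; the summation over cylinders and the appeal to Corollary~\ref{kryterij dodatnosti miry lebega} (dropping one term of the divergent series is harmless) complete the proof correctly.
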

\begin{proof}
Let $\Delta_{a_{1}a_{2}\ldots a_{k-1}}^{\bar{O}_{2}}$ be an arbitrary cylinder of  rank  $k-1$ with
$a_{j}\in V_{j},~\forall j\in\{1,2,\ldots,k-1\}.$ Then
$$|\Delta_{a_{1}a_{2}\ldots a_{k-1}b_{1}^{(k)}}^{\bar{O}^{2}}|\geq |\Delta_{a_{1}a_{2}\ldots a_{k-1}(b_{1}^{(k)}+1)}^{\bar{O}^{2}}|\geq\ldots\geq
|\Delta_{a_{1}a_{2}\ldots a_{k-1}(b_{2}^{(k)}-1)}^{\bar{O}^{2}}| .$$
Therefore $$|\Delta_{a_{1}a_{2}\ldots a_{k-1}b_{1}^{(k)}}^{\bar{O}^{2}}|\geq\frac{1}{b_{2}^{(k)}-b_{1}^{(k)}-1}\cdot
\sum_{i=b_{1}^{(k)}+1}^{b_{2}^{(k)}-1}|\Delta_{a_{1}a_{2}\ldots a_{k-1}i}^{\bar{O}^{2}}| $$
On the other hand,
$$\sum\limits_{i=1}^{b_{1}^{(k)}-1}|\Delta_{a_{1}a_{2}\ldots a_{k-1}i}^{\bar{O}^{2}}|=
\sum\limits_{i=1}^{b_{1}^{(k)}-1}\frac{1}{(c_{k-1}(c_{k-1}+1)-1+i)(c_{k-1}(c_{k-1}+1)+i)}=$$
$$=\sum\limits_{i=1}^{b_{1}^{(k)}-1}\left(\frac{1}{c_{k-1}(c_{k-1}+1)-1+i}-\frac{1}{c_{k-1}(c_{k-1}+1)+i}\right)=$$
$$=\frac{1}{c_{k-1}(c_{k-1}+1)}-\frac{1}{c_{k-1}(c_{k-1}+1)+b_{1}^{(k)}-1}=
\frac{b_{1}^{(k)}-1}{c_{k-1}(c_{k-1}+1)(c_{k-1}(c_{k-1}+1)+b_{1}^{(k)}-1)}.$$
Hence 
$$\frac{\sum\limits_{i=1}^{b_{1}^{(k)}-1}|\Delta_{a_{1}a_{2}\ldots a_{k-1}i}^{\bar{O}^{2}}|}{|\Delta_{a_{1}a_{2}\ldots a_{k-1}(b_{1}^{(k)})}^{\bar{O}^{2}}|}=\frac{\frac{b_{1}^{(k)}-1}{c_{k-1}(c_{k-1}+1)\left(c_{k-1}
(c_{k-1}+1)+b_{1}^{(k)}-1\right)}}{\frac{1}{\left(c_{k-1}
(c_{k-1}+1)+b_{1}^{(k)}-1\right)\left(c_{k-1}
(c_{k-1}+1)+b_{1}^{(k)}\right)}}=$$
$$=\frac{c_{k-1}(c_{k-1}+1)+b_{1}^{(k)}}{c_{k-1}(c_{k-1}+1)}\cdot \left(b_{1}^{(k)}-1\right)=\left(1+\frac{b_{1}^{(k)}}{c_{k-1}(c_{k-1}+1)}\right)(b_{1}^{(k)}-1)\leq$$ $$\leq \left(1+\frac{b_{1}^{(k)}}{{2^{2}}^{k-2}}\right)\cdot b_{1}^{(k)}=:l_{k}.$$
So, \begin{equation}\label{OcDC}
\left\{
\begin{array}{lll}
|\Delta_{a_{1}a_{2}\ldots a_{k-1}b_{1}^{(k)}}^{\bar{O}^{2}}|\geq \frac{1}{l_{k}}\cdot \sum\limits_{i=1}^{b_{1}^{(k)}-1}|\Delta_{a_{1}a_{2}\ldots a_{k-1}i}^{\bar{O}^{2}}|;\\
|\Delta_{a_{1}a_{2}\ldots a_{k-1}b_{1}^{(k)}}^{\bar{O}^{2}}|\geq \frac{1}{b_{2}^{(k)}-b_{1}^{(k)}}\cdot \sum\limits_{i=b_{1}^{(k)}+1}^{b_{2}^{(k)}-1}|\Delta_{a_{1}a_{2}\ldots a_{k-1}i}^{\bar{O}^{2}}|.
\end{array}
\right.
\end{equation}
$$|\Delta_{a_{1}a_{2}\ldots a_{k-1}b_{2}^{(k)}}^{\bar{O}^{2}}|\geq \frac{1}{b_{3}^{(k)}-b_{2}^{(k)}}\cdot \sum\limits_{i=b_{2}^{(k)}+1}^{b_{3}^{(k)}-1}|\Delta_{a_{1}a_{2}\ldots a_{k-1}i}^{\bar{O}^{2}}|; $$
$$\ldots\ldots\ldots\ldots\ldots\ldots\ldots\ldots\ldots\ldots\ldots\ldots\ldots\ldots\ldots\ldots\ldots $$
$$|\Delta_{a_{1}a_{2}\ldots a_{k-1}b_{n}^{(k)}}^{\bar{O}^{2}}|\geq \frac{1}{b_{n+1}^{(k)}-b_{n}^{(k)}}\cdot \sum\limits_{i=b_{n}^{(k)}+1}^{b_{n+1}^{(k)}-1}|\Delta_{a_{1}a_{2}\ldots a_{k-1}i}^{\bar{O}^{2}}|\geq \frac{1}{d_{k}} \cdot \sum\limits_{i=b_{n}^{(k)}+1}^{b_{n+1}^{(k)}-1}|\Delta_{a_{1}a_{2}\ldots a_{k-1}i}^{\bar{O}^{2}}|. $$
From  (\ref{OcDC}) it follows that
$$|\Delta_{a_{1}a_{2}\ldots a_{k-1}b_{1}^{(k)}}^{\bar{O}^{2}}|\geq\frac{1}{2l_{k}}\cdot \sum\limits_{i=1}^{b_{1}^{(k)}-1}|\Delta_{a_{1}a_{2}\ldots a_{k-1}i}^{\bar{O}^{2}}|+\frac{1}{2d_{k}}\sum_{i=b_{1}^{(k)}+1}^{b_{2}^{(k)}-1}|\Delta_{a_{1}a_{2}\ldots a_{k-1}i}^{\bar{O}^{2}}|\geq $$ $$\geq \frac{1}{2l_{k}\cdot d_{k}}\sum_{i=1,~i \neq b_{1}^{(k)}}^{b_{2}^{(k)}-1}|\Delta_{a_{1}a_{2}\ldots a_{k-1}i}^{\bar{O}^{2}}|.
 $$
  Therefore, $$\sum_{i \not \in V_{k} }|\Delta_{a_{1}a_{2}\ldots a_{k-1}i}^{\bar{O}^{2}}|\geq \frac{1}{2l_{k}d_{k}}\sum_{i\in V_{k}}|\Delta_{a_{1}a_{2}\ldots a_{k-1}i}^{\bar{O}^{2}}|,$$ $$~\forall (a_{1},a_{2},\ldots,a_{k-1}),~a_{j}\in V_{j},j\in \{1,2,\ldots,k-1\}.$$
  Hence
  $$
  \left\{
\begin{array}{lll}
\lambda (\overline{F}_{k})\geq\frac{1}{2l_{k}d_{k}}\cdot \lambda(F_{k});\\
\lambda(\overline{F}_{k})+\lambda(F_{k})=\lambda(F_{k-1}).
\end{array}
\right.
   $$
 So, $$\frac{\lambda(\overline{F}_{k})}{\lambda(F_{k-1})}\geq\frac{1}{2l_{k}d_{k}+1}\geq\frac{1}{4l_{k}d_{k}}. $$
  If $\sum\limits_{k=1}^{\infty}\frac{1}{l_{k}\cdot d_{k}}=+\infty,$ then $\sum\limits_{k=1}^{\infty}\frac{\lambda(\overline{F}_{k})}{\lambda(F_{k-1})}=+\infty, $ and, therefore, $\lambda(C[\bar{O}^{2},\{V_{k}\}])=0.$
 It is clear that $$\sum\limits_{k=1}^{\infty}\frac{1}{l_{k}d_{k}} =\sum\limits_{k\in A}\frac{1}{\left(1+\frac{b_{1}^{(k)}}{{2^{2}}^{k-2}}\right)\cdot b_{1}^{(k)}\cdot d_{k}}+\sum\limits_{k \not\in A}
  \frac{1}{\left(1+\frac{b_{1}^{(k)}}{{2^{2}}^{k-2}}\right)\cdot b_{1}^{(k)}\cdot d_{k}},$$
  where $A=\left\{k:~\frac{b_{1}^{(k)}}{{2^{2}}^{k-2}}\leq 1\right\}.$

  The series $\sum\limits_{k\in A}\frac{1}{\left(1+\frac{b_{1}^{(k)}}{{2^{2}}^{k-2}}\right)\cdot b_{1}^{(k)}\cdot d_{k}}$ diverges if and only if the series  $\sum\limits_{k=1}^{\infty}\frac{1}{b_{1}^{(k)}\cdot d_{k}}$ does, and the series $$\sum\limits_{k \not\in A}
  \frac{1}{\left(1+\frac{b_{1}^{(k)}}{{2^{2}}^{k-2}}\right)\cdot b_{1}^{(k)}\cdot d_{k}}$$ always converges.

   Therefore the divergence of the series $ \sum\limits_{k=1}^{\infty}\frac{1}{l_{k}\cdot d_{k}}$ is equivalent to the divergence of the series $~\sum\limits_{k=1}^{\infty}\frac{1}{b_{1}^{(k)}\cdot d_{k}}$, which proves the Theorem.
\end{proof}
\begin{corollary}
 Let $V_{k}=N\setminus\{b_{1},b_{2},\ldots,b_{m},\ldots\}.$ If $$\exists~d\in N:~b_{n+1}-b_{n}\leq d~\forall n\in N,$$ then $\lambda\left(C[\bar{O}^{2},\{V_{k}\}]\right)=0.$
\end{corollary}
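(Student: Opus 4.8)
The plan is to deduce the corollary as an immediate specialization of the theorem just proved. I would set $b_{m}^{(k)} := b_{m}$ for all $m,k \in N$ and $d_{k} := d$ for all $k \in N$; then, for every $k$, the sequence $\{b_{m}^{(k)}\}_{m=1}^{\infty}$ is one and the same increasing sequence of positive integers (reading $\{b_{m}\}$ as listed in increasing order), and the bounded-gap hypothesis $b_{n+1}^{(k)} - b_{n}^{(k)} \le d_{k}$ of the theorem is exactly the assumption $b_{n+1} - b_{n} \le d$ made in the corollary.

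Next I would verify the remaining divergence hypothesis. Since $b_{1}^{(k)} = b_{1}$ and $d_{k} = d$ do not depend on $k$,
$$\sum_{k=1}^{\infty} \frac{1}{b_{1}^{(k)} \cdot d_{k}} = \sum_{k=1}^{\infty} \frac{1}{b_{1} d} = +\infty,$$
this being a series every term of which equals the fixed positive number $\tfrac{1}{b_{1}d}$. Hence all hypotheses of the preceding theorem are met, and it yields $\lambda\!\left(C[\bar{O}^{2},\{V_{k}\}]\right) = 0$, which is the assertion of the corollary.

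I expect no genuine obstacle here: the whole content of the corollary is that a uniformly bounded gap between the excluded digits forces $b_{1}^{(k)}$ and $d_{k}$ to be bounded (here, even constant) in $k$, so that the series governing $\lambda(C)$ through Corollary~\ref{kryterij dodatnosti miry lebega} necessarily diverges. The only situation worth a remark is the degenerate one in which $b_{n+1}-b_{n}=1$ for every $n$, so that $\overline{V}_{k}$ is cofinite and $V_{k}=\{1,2,\dots,b_{1}-1\}$ is finite (formally outside case (c)); but in that case the proof of the theorem above still applies verbatim, the inner sums $\sum_{i=b_{1}^{(k)}+1}^{b_{2}^{(k)}-1}$ being empty, and one could equally invoke the second theorem of part~(b) with $m_{k}=b_{1}-1$, since $\sum_{k}\tfrac{2^{2^{k-1}}}{2^{2^{k-1}}+(b_{1}-1)}$ diverges.
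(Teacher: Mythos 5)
Your proposal is correct and is exactly the intended argument: the corollary is the specialization of the preceding theorem to $b_{m}^{(k)}=b_{m}$, $d_{k}=d$, for which the series $\sum_{k}\frac{1}{b_{1}d}$ trivially diverges. Your extra remark on the degenerate cofinite-complement case is a harmless bonus but not needed for the statement as used in the paper.
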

\begin{corollary}
If $V_{k}=N\setminus\{1,3,5,\ldots\},$   then $\lambda\left(C[\bar{O}^{2},\{V_{k}\}]\right)=0.$
\end{corollary}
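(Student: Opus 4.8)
The statement is an immediate specialization of the last theorem (equivalently, of the corollary just above). Indeed, here the set of deleted digits $\overline{V}_k:=\mathbb{N}\setminus V_k=\{1,3,5,\ldots\}$ is the same for every $k$ and is precisely the arithmetic progression of odd positive integers, so $V_k=\{2,4,6,\ldots\}$ for all $k$. The plan is therefore to apply the theorem with $b_m^{(k)}=2m-1$ for all $k,m\in\mathbb{N}$: then $b_1^{(k)}=1$ and $b_{n+1}^{(k)}-b_n^{(k)}=2$ for every $n$, so condition \eqref{Ym1} holds with the constant $d_k=2$, while
$$\sum_{k=1}^{\infty}\frac{1}{b_1^{(k)}\,d_k}=\sum_{k=1}^{\infty}\frac12=+\infty$$
gives \eqref{Ym2}. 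Hence $\lambda\left(C[\bar{O}^2,\{V_k\}]\right)=0$. This is nothing but the preceding corollary read with $d=2$.

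Alternatively --- and this makes transparent why the theorem applies --- one can argue directly. Fix an admissible prefix $(a_1,\ldots,a_{k-1})$ with $a_j\in V_j$, and inside $\Delta_{a_1\ldots a_{k-1}}^{\bar{O}^2}$ pair each forbidden child $\Delta_{a_1\ldots a_{k-1}(2j-1)}^{\bar{O}^2}$ with the admissible child $\Delta_{a_1\ldots a_{k-1}(2j)}^{\bar{O}^2}$, $j\in\mathbb{N}$. Since $|\Delta_{a_1\ldots a_{k-1} i}^{\bar{O}^2}|=\bigl((c_{k-1}(c_{k-1}+1)-1+i)(c_{k-1}(c_{k-1}+1)+i)\bigr)^{-1}$ is decreasing in the last digit $i$, every forbidden child is at least as long as the admissible child paired with it, so $\sum_{i\notin V_k}|\Delta_{a_1\ldots a_{k-1} i}^{\bar{O}^2}|\ge\sum_{i\in V_k}|\Delta_{a_1\ldots a_{k-1} i}^{\bar{O}^2}|$. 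Summing over all admissible prefixes yields $\lambda(\overline{F}_k)\ge\lambda(F_k)$, and since $\lambda(\overline{F}_k)+\lambda(F_k)=\lambda(F_{k-1})$ we get $\frac{\lambda(\overline{F}_k)}{\lambda(F_{k-1})}\ge\frac12$ for every $k$. Therefore $\sum_{k}\frac{\lambda(\overline{F}_k)}{\lambda(F_{k-1})}=+\infty$, and Corollary \ref{kryterij dodatnosti miry lebega} gives $\lambda\left(C[\bar{O}^2,\{V_k\}]\right)=0$ (equivalently, $\lambda(F_k)\le 2^{-k}\to 0$).

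There is no genuine obstacle here: all the substantive estimates --- the bounds \eqref{OcDC} on ratios of consecutive cylinder lengths and the Borel--Cantelli / infinite-product machinery --- were already carried out in the proof of the last theorem, and the present corollary only requires checking that the progression of odd numbers has bounded gaps, which is trivial. The sole mild point I would spell out is the monotonicity of $|\Delta_{a_1\ldots a_{k-1} i}^{\bar{O}^2}|$ in $i$, but this is a one-line consequence of the closed formula for cylinder lengths recorded earlier.
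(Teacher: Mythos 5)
Your proposal is correct and matches the paper's (implicit) argument: the corollary is exactly the preceding theorem/corollary applied with $b_m^{(k)}=2m-1$, so $b_1^{(k)}=1$, $d_k=2$, and the series $\sum_k \frac{1}{b_1^{(k)}d_k}$ trivially diverges. Your alternative direct pairing argument (odd child at least as long as the next even child, hence $\lambda(\overline{F}_k)\ge\lambda(F_k)$ and $\lambda(F_k)\le 2^{-k}$) is also sound, but it is an optional bonus rather than a different route in any essential sense.
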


\begin{theorem}
If $V_{k}=N\setminus\{1,4,9,\ldots,m^{2},\ldots\},$ then $\lambda\left(C[\bar{O}^{2},\{V_{k}\}]\right)>0.$
\end{theorem}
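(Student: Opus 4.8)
The plan is to use the Lebesgue‑measure criterion of Corollary~\ref{kryterij dodatnosti miry lebega}: since here $\overline{V}_k=N\setminus V_k=\{m^2:m\in N\}$ is infinite for every $k$, we are in case c), and it is enough to show that the series $\sum_{k=1}^{\infty}\frac{\lambda(\overline{F}_{k})}{\lambda(F_{k-1})}$ converges. Fix $k\ge 2$ and an arbitrary admissible cylinder $\Delta_{a_{1}\ldots a_{k-1}}^{\bar{O}^{2}}$ with $a_{j}\in V_{j}$, let $c_{k-1}=c_{k-1}(a_1,\dots,a_{k-1})$ be the associated last $O^{2}$‑digit ($c_{i+1}=c_i(c_i+1)-1+a_{i+1}$, $c_1=a_1$), and put $Q:=c_{k-1}(c_{k-1}+1)$. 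Using the length formulas $\lambda(\Delta_{a_{1}\ldots a_{k-1}}^{\bar{O}^{2}})=\frac1Q$ and $\lambda(\Delta_{a_{1}\ldots a_{k-1}i}^{\bar{O}^{2}})=\frac{1}{(Q-1+i)(Q+i)}$ already recorded in the excerpt, and the fact that $\overline{F}_{k}\cap\Delta_{a_{1}\ldots a_{k-1}}^{\bar{O}^{2}}=\bigcup_{m\ge 1}\Delta_{a_{1}\ldots a_{k-1}m^{2}}^{\bar{O}^{2}}$, I would write the local ratio as
$$\frac{\lambda(\overline{F}_{k}\cap\Delta_{a_{1}\ldots a_{k-1}}^{\bar{O}^{2}})}{\lambda(\Delta_{a_{1}\ldots a_{k-1}}^{\bar{O}^{2}})}=Q\sum_{m=1}^{\infty}\frac{1}{(Q-1+m^{2})(Q+m^{2})}.$$

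The main point — and the step I expect to be the real obstacle — is to see that this quantity is of order $Q^{-1/2}$, i.e.\ that the \emph{sparsity} of the perfect squares beats the large factor $Q$ in front. Since $Q\ge2$ one has $Q-1+m^{2}\ge\tfrac12(Q+m^{2})$, so $\sum_{m\ge1}\frac{1}{(Q-1+m^{2})(Q+m^{2})}\le 2\sum_{m\ge1}(Q+m^{2})^{-2}$, and comparing the last sum with the integral of the decreasing function $x\mapsto(Q+x^{2})^{-2}$ gives $\sum_{m\ge1}(Q+m^{2})^{-2}\le\int_{0}^{\infty}(Q+x^{2})^{-2}\,dx=\frac{\pi}{4Q^{3/2}}$. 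Hence
$$\frac{\lambda(\overline{F}_{k}\cap\Delta_{a_{1}\ldots a_{k-1}}^{\bar{O}^{2}})}{\lambda(\Delta_{a_{1}\ldots a_{k-1}}^{\bar{O}^{2}})}\le\frac{\pi}{2\sqrt{Q}}.$$
Alternatively, one can split the sum at $m$ of order $\sqrt{Q}$ — for $m^{2}\le Q$ the terms are $\le 4Q^{-2}$ and there are $\le\sqrt Q$ of them, for $m^{2}>Q$ the terms are $\le Q\,m^{-4}$ — and obtain the same order $Q^{-1/2}$ by an elementary estimate instead of the integral.

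Because the bound $\frac{\pi}{2\sqrt Q}$ depends only on $Q$, and a ratio of convergent sums of nonnegative terms is at most the supremum of the termwise ratios, this passes from the local to the global ratio: $\frac{\lambda(\overline{F}_{k})}{\lambda(F_{k-1})}\le\frac{\pi}{2}\sup Q^{-1/2}$, the supremum over all $(a_1,\dots,a_{k-1})$ with $a_j\in V_j$. Finally I would invoke the super‑exponential growth of the $O^{2}$‑digits established in the proof of Lemma~\ref{Lemma ocinka vidnoshennya cylindriv}, namely $c_{k-1}\ge 2^{2^{k-3}}$ for $k\ge3$, whence $Q\ge c_{k-1}^{2}\ge 2^{2^{k-2}}$ and
$$\frac{\lambda(\overline{F}_{k})}{\lambda(F_{k-1})}\le\frac{\pi}{2\cdot 2^{2^{k-3}}},\qquad k\ge3,$$
the terms $k=1,2$ being trivially finite. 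Thus $\sum_{k}\frac{\lambda(\overline{F}_{k})}{\lambda(F_{k-1})}<+\infty$, and Corollary~\ref{kryterij dodatnosti miry lebega} yields $\lambda(C[\bar{O}^{2},\{V_{k}\}])>0$. It is worth noting why this does not clash with the preceding ``bounded‑gap'' theorem in part c): the gaps $(m+1)^{2}-m^{2}=2m+1$ between consecutive forbidden digits are unbounded, so that theorem does not apply, and it is precisely this growth of the gaps that makes the $Q^{-1/2}$‑type series of ratios summable.
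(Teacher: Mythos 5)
Your proposal is correct and follows essentially the same route as the paper: compute the ratio $\lambda(\overline{F}_k\cap\Delta)/\lambda(\Delta)$ on each admissible cylinder, bound $\sum_{m}\frac{Q}{(Q-1+m^2)(Q+m^2)}$ by an integral comparison to get a bound of order $Q^{-1/2}=\bigl(c_{k-1}(c_{k-1}+1)\bigr)^{-1/2}\le \pi/(2\sqrt{Q})$, use the super-exponential growth $c_{k-1}\ge 2^{2^{k-3}}$ to make this bound uniform and doubly exponentially small, and conclude via the summability criterion of Corollary~\ref{kryterij dodatnosti miry lebega}. Your intermediate estimate ($Q-1+m^2\ge\tfrac12(Q+m^2)$ plus $\int_0^\infty(Q+x^2)^{-2}dx$) differs only cosmetically from the paper's (termwise bound $\frac{Q}{Q-1+m^2}<1$ plus $\int_0^\infty\frac{dx}{Q+x^2}$), and your handling of the small-$k$ terms is if anything slightly more careful.
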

\begin{proof}
Let $\Delta_{a_{1}a_{2}\ldots a_{k-1}}^{\bar{O}_{2}}$ be an arbitrary cylinder of  rank  $k-1$ with
$a_{j}\in V_{j},~\forall j\in\{1,2,\ldots,k-1\}.$
Then
$$\lambda\left(\overline{F}_{k}\bigcap\Delta_{a_{1}\ldots a_{k-1}}^{\bar{O}^{2}} \right)=\sum\limits_{m=1}^{\infty}\frac{1}{\left(c_{k-1}(c_{k-1}+1)-1+m^{2}\right)
\left(c_{k-1}(c_{k-1}+1)+m^{2}\right)},$$

$$
|\Delta_{a_{1}\ldots a_{k-1}}^{\bar{O}^{2}}|=\frac{1}{c_{k-1}(c_{k-1}+1)}, 
.$$

$$\frac{\lambda\left(\overline{F}_{k}\bigcap\Delta_{a_{1}\ldots a_{k-1}}^{\bar{O}^{2}} \right)}{\lambda\left(\Delta_{a_{1}\ldots a_{k-1}}^{\bar{O}^{2}} \right)} = 
\sum\limits_{m=1}^{\infty}\frac{c_{k-1}(c_{k-1}+1)}{(c_{k-1}(c_{k-1}+1)-1+m^{2})(c_{k-1}(c_{k-1}+1)+m^{2})} <$$
$$
<\sum\limits_{m=1}^{\infty}\frac{c_{k-1}(c_{k-1}+1)-1+m^{2}}{(c_{k-1}(c_{k-1}+1)-1+m^{2})(c_{k-1}(c_{k-1}+1)+m^{2})}=
\sum\limits_{m=1}^{\infty}\frac{1}{c_{k-1}(c_{k-1}+1)+m^{2}}< $$ $$<\int\limits_{0}^{+\infty}\frac{1}{c_{k-1}(c_{k-1}+1)+x^{2}}dx=\frac{\pi}{2 \cdot \sqrt{(c_{k-1}(c_{k-1}+1))}}\leq \frac{\pi}{2\cdot 2^{{2^{k-2}}}}
$$


Therefore $$\frac{\lambda(\overline{F}_{k})}{\lambda(F_{k-1})}<\frac{\pi}{2\cdot 2^{{2^{k-2}}}}.$$

 Since $$\sum_{k=1}^{\infty}\frac{\lambda(\overline{F}_{k})}{\lambda(F_{k-1})}<+\infty, $$ we get $\lambda\left(C[\bar{O}^{2},V_{k}]\right)> 0,$which proves the theorem.
\end{proof}

\section{Fractal properties of real numbers with bounded $\bar{O}^2$-digits}

In the case of zero Lebesgue measure, the next level for the  study of properties of the sets $C[\bar{O}^2,\{V_k\}]$ is the determination
of their Hausdorff dimension  $\dim_H(\cdot)$ (see, e.g., \cite{Fal} for the definition and main properties of this main fractal dimension).


We shall study this problem for the case where $V_k = \{1,2,..., m_k\}$. A similar problem for the continued fraction expansion were studied by many authors during last  60 years.
Set
 $$E_2 = \{x: x = \Delta^{c.f.}_{\alpha_1(x)...\alpha_k(x)...~} ,\alpha_k(x) \in \{1,2\} \}.$$

In  1941   Good \cite{Good} shows that  $$0,5194 < \dim_H (E_2)
<0, 5433.$$
In 1982 and  1985 Bumby \cite{Bumby82, Bumby85} improves these bounds:
$$0,5312 < \dim_H (E_2) < 0,5314.$$
 In  1989 Hensley \cite{Hensley89}  shows that
 $$0,53128049 < \dim_H (E_2) < 0,53128051.$$
In 1996 the same author  (\cite{Hensley96}) improves his estimate up to $$0,5312805062772051416.$$
A new approach to the determination of the Hausdorff dimension of the set $E_2$  with a desired precision was developed by Jenkinson and Policott in 2001 \cite{Jenkinson}.

Our nearest aim is to study fractal properties of sets which are $\bar{O}^2$-analogues of the above discussed set $E_2$, i.e., the set
$C[\bar{O}^2,\{1,2\}]$ and its generalization $C[\bar{O}^2,\{1,2, ..., m\}]$.
The following theorem shows that from the fractal geometry point of view  the sets $E_2$ and $C[\bar{O}^2,\{1,2\}]$ (as well as their generalizations) are cardinally different.

\begin{theorem}\label{MnVk1}
Let  $V_k=\{1,2,3,..., m_{k}\}$, where $m_{k}\in N$.

 If for any positive $\alpha$ the following equality
 \begin{equation}\label{din_h = 0}
   \lim\limits_{k\to \infty} \frac{m_1 \cdot m_2\cdot ... \cdot m_k}{2^{(\alpha 2^{k-1})}}
=0
 \end{equation}
  holds, then the Hausdorff dimension of the set
$C[\bar{O}^2,\{V_k\}]$ is equal to zero, i.e.,
$$\dim_H (C[\bar{O}^2,\{V_k\}]) =0.$$
\end{theorem}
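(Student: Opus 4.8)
The plan is to use the natural covering of $C[\bar{O}^2,\{V_k\}]$ by cylinders of rank $k$ and to show that the $\alpha$-dimensional Hausdorff premeasure built from these covers tends to zero for every $\alpha>0$, which forces $\dim_H=0$. The set $C[\bar{O}^2,\{V_k\}]$ is covered by the family $\{\Delta^{\bar{O}^2}_{a_1\ldots a_k}: a_j\in V_j\}$, and by the cylinder-structure established in Section~2 each such cylinder has length $\tfrac{1}{c_k(c_k+1)}$, where $c_k$ is computed from $a_1,\ldots,a_k$ via $c_{i+1}=c_i(c_i+1)-1+a_{i+1}$, $c_1=a_1$. The number of cylinders of rank $k$ in the cover is exactly $m_1 m_2\cdots m_k$, and from the bound $c_k\le M_k$ used in the earlier theorems (more precisely from the chain $c_k\ge 2^{2^{k-2}}$ valid since each $a_i\ge 1$) we get a uniform lower bound $c_k(c_k+1)\ge 2^{2^{k-1}}$, hence $|\Delta^{\bar{O}^2}_{a_1\ldots a_k}|\le 2^{-2^{k-1}}$.

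First I would fix $\alpha>0$ and estimate the $\alpha$-premeasure along this sequence of covers:
$$
\mathcal{H}^{\alpha}_{\delta_k}\bigl(C[\bar{O}^2,\{V_k\}]\bigr)
\le \sum_{a_1\in V_1}\cdots\sum_{a_k\in V_k}\bigl|\Delta^{\bar{O}^2}_{a_1\ldots a_k}\bigr|^{\alpha}
\le (m_1 m_2\cdots m_k)\cdot\Bigl(\frac{1}{2^{2^{k-1}}}\Bigr)^{\alpha}
= \frac{m_1 m_2\cdots m_k}{2^{\alpha 2^{k-1}}},
$$
where $\delta_k:=2^{-2^{k-1}}\to 0$ as $k\to\infty$. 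By hypothesis~\eqref{din_h = 0} the right-hand side tends to $0$, so $\mathcal{H}^{\alpha}\bigl(C[\bar{O}^2,\{V_k\}]\bigr)=0$. Since $\alpha>0$ was arbitrary, $\dim_H\bigl(C[\bar{O}^2,\{V_k\}]\bigr)=0$.

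The only technical point that needs care is the uniform length bound for the rank-$k$ cylinders appearing in the cover: I must make sure that the estimate $c_k\ge 2^{2^{k-2}}$ (and hence $|\Delta^{\bar{O}^2}_{a_1\ldots a_k}|\le 2^{-2^{k-1}}$) really holds for \emph{every} admissible choice $a_j\in V_j$, not just on average — but this follows at once from $c_{i+1}=c_i(c_i+1)-1+a_{i+1}\ge c_i^2$ (using $a_{i+1}\ge 1$) iterated from $c_1\ge 1$, exactly as in the proof of Lemma~\ref{Lemma ocinka vidnoshennya cylindriv}. With that in hand there is no genuine obstacle; the computation above is essentially the whole argument, and the doubly-exponential gap between the combinatorial factor $m_1\cdots m_k$ and the geometric decay $2^{-\alpha 2^{k-1}}$ is precisely what condition~\eqref{din_h = 0} is designed to guarantee.
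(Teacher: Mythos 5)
Your argument is correct and is essentially identical to the paper's proof: both cover $C[\bar{O}^2,\{V_k\}]$ by the $m_1m_2\cdots m_k$ admissible rank-$k$ cylinders, use the recursion $c_{i+1}=c_i(c_i+1)-1+a_{i+1}$ to get the uniform length bound $|\Delta^{\bar{O}^2}_{a_1\ldots a_k}|=\frac{1}{c_k(c_k+1)}\le 2^{-2^{k-1}}$, and conclude $H^{\alpha}=0$ for every $\alpha>0$ from hypothesis~\eqref{din_h = 0}. The only cosmetic point is that the iteration should really be anchored at $c_2\ge 2$ (as in Lemma~\ref{Lemma ocinka vidnoshennya cylindriv}) rather than at $c_1\ge 1$, since $c_{i+1}\ge c_i^2$ alone gives nothing from $c_1=1$; with that noted, the proof stands as written.
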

\begin{proof} From the construction of the sets $V_k$ it follows that the set $C[\bar{O}^2,\{V_k\}]$ can be covered by $m_{1}$ cylinders of the first rank, by  $m_{1}\cdot m_{2}$ cylinders of  rank 2, $\ldots,$ by
$m_{1}\cdot m_{2}\cdot\ldots\cdot m_{k}$ cylinders of rank $k$, ... .
The cylinder $\overline{\Delta}_{\underbrace{11\ldots
1}_{k}}$ has the maximal length among all cylinders of  rank  $k$:
$$|\overline{\Delta}_{\underbrace{11\ldots
1}_{k}}|\leq\frac{1}{c_{k}(c_{k}+1)}<\frac{1}{2^{(2^{k-1})}}.$$

Let us consider the coverings of the set $C[\bar{O}^2,\{V_k\}]$ by cylinders of  rank $k$:
$$\overline{\Delta}_{a_{1}a_{2}\ldots a_{k}},~~a_{1}\in
V_{1},a_{2}\in V_{2},\ldots,a_{k}\in V_{k}.$$ It is clear that
$\left(\bigcup\limits_{a_{1}\in V_{1},\ldots, a_{k}\in
V_{k}}\right)\supset C[\bar{O}^2,\{V_k\}]. $

For a given $\varepsilon>0$ there exists $k\in N$  such that
$\frac{1}{2^{(2^{k-1})}}<\varepsilon.$ In such a case the length of any  $k$-rank cylinder of the $\bar{O}^2$-expansion does not exceed $\varepsilon.$
So, $$H_{\varepsilon}^{\alpha}(C[\bar{O}^2,\{V_k\}])=\inf\limits_{|E_{i}|\leq\varepsilon,
(\bigcup E_{i})\supset C}\sum_{i}|E_{i}|^{\alpha}\leq \sum_{a_{1}\in
V_{1},\ldots,a_{k}\in V_{k}}|\overline{\Delta}_{a_{1}a_{2}\ldots
a_{k}}|^{\alpha}\leq $$ $$\leq m_{1}\cdot m_{2}\cdot \ldots \cdot
m_{k}\cdot\left(\frac{1}{2^{(2^{k-1})}}\right)^{\alpha}=\frac{m_{1}\cdot
m_{2}\cdot \ldots \cdot m_{k}}{2^{\alpha(2^{k-1})}}.$$
$$H^{\alpha}(C[\bar{O}^2,\{V_k\}])=\lim\limits_{\varepsilon\downarrow
0}H_{\varepsilon}^{\alpha}(C[\bar{O}^2,\{V_k\}])=\lim\limits_{k\rightarrow\infty}H_{\varepsilon_{k}}^{\alpha}(C[\bar{O}^2,\{V_k\}]),$$
where
$$\varepsilon_{k}=\frac{1}{2^{(2^{k-1})}}.$$
Since $$H_{\varepsilon_{k}}^{\alpha}(C[\bar{O}^2,\{V_k\}])\leq \frac{m_{1}\cdot
m_{2}\cdot \ldots \cdot m_{k}}{2^{\alpha(2^{k-1})}}, $$ we have
$$H^{\alpha}(C[\bar{O}^2,\{V_k\}])=\lim\limits_{k\rightarrow\infty}
H_{\varepsilon_{k}}^{\alpha}(C[\bar{O}^2,\{V_k\}])\leq
\lim\limits_{k\rightarrow\infty}\frac{m_{1}\cdot m_{2}\cdot \ldots
\cdot m_{k}}{2^{\alpha(2^{k-1})}}=0~~(\forall \alpha>0)$$.

Therefore, $H^{\alpha}(C[\bar{O}^2,\{V_k\}])=0,~~~\forall\alpha>0,$ and so
$$\dim_H(C[\bar{O}^2,\{V_k\}])=\inf\{\alpha:~~H^{\alpha}(C[\bar{O}^2,\{V_k\}])=0\} =0.$$
\end{proof}

\begin{corollary}
If there exists a number $a \in N$ such that $m_k \leq a^k,~~ \forall k \in
N$, then
 $$\dim_H (C[\bar{O}^2,\{V_k\}]) =0.$$
\end{corollary}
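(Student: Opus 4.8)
The plan is to check that the hypothesis $m_k\le a^k$ forces the limit condition \eqref{din_h = 0} in Theorem \ref{MnVk1}; once this is done, the conclusion $\dim_H\bigl(C[\bar{O}^{2},\{V_k\}]\bigr)=0$ is immediate from that theorem, so no separate Hausdorff-measure estimates are needed here.

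First I would estimate the numerator in \eqref{din_h = 0}. From $m_j\le a^j$ for every $j$ we get
$$m_1 m_2\cdots m_k\le a^{\,1+2+\dots+k}=a^{\,k(k+1)/2}.$$
Hence, writing $\beta:=\log_2 a$, for each fixed $\alpha>0$ one has
$$0\le \frac{m_1 m_2\cdots m_k}{2^{(\alpha 2^{k-1})}}\le \frac{a^{\,k(k+1)/2}}{2^{(\alpha 2^{k-1})}}=2^{\,\beta k(k+1)/2-\alpha 2^{k-1}}.$$

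Next I would pass to the limit $k\to\infty$. In the exponent on the right the subtracted term $\alpha 2^{k-1}$ grows exponentially in $k$, while $\beta k(k+1)/2$ grows only like a quadratic polynomial in $k$; therefore $\beta k(k+1)/2-\alpha 2^{k-1}\to-\infty$, so the right-hand side tends to $0$, and by the squeeze theorem the equality \eqref{din_h = 0} holds for every $\alpha>0$. Applying Theorem \ref{MnVk1} then yields $\dim_H\bigl(C[\bar{O}^{2},\{V_k\}]\bigr)=0$. There is no genuine obstacle in this argument; the only point deserving an explicit word is the elementary fact that $2^{k-1}$ eventually dominates any fixed polynomial in $k$, so that the quadratic term $\beta k(k+1)/2$ is negligible compared with $\alpha 2^{k-1}$ — the standard comparison of polynomial and exponential growth.
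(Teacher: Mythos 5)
Your proof is correct and is exactly the intended argument: the paper states this corollary without proof, the implicit justification being precisely your verification that $m_1\cdots m_k\le a^{k(k+1)/2}$ makes condition \eqref{din_h = 0} hold for every $\alpha>0$, so Theorem \ref{MnVk1} applies. Nothing is missing.
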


\begin{corollary}
 If  $m_k = m_0,~~ \forall k \in N$ for some positive integer $m_0$, then  $$\dim_H (C[\bar{O}^2,\{V_k\}]) =0.$$
\end{corollary}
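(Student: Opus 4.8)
The plan is to deduce this corollary directly from the preceding Theorem~\ref{MnVk1}. Indeed, the constant-digit case $m_k = m_0$ for all $k$ is just a special instance of the ``polynomial growth'' corollary, but it is also clean to verify the hypothesis \eqref{din_h = 0} of Theorem~\ref{MnVk1} directly. With $m_k \equiv m_0$ we have $m_1 \cdot m_2 \cdots m_k = m_0^k$, so the quantity to be estimated is
\[
\frac{m_0^k}{2^{\alpha 2^{k-1}}}.
\]

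First I would observe that the denominator grows doubly-exponentially in $k$ while the numerator grows only exponentially. Precisely, fix an arbitrary $\alpha>0$. Taking logarithms base $2$, the expression above equals $2^{\,k\log_2 m_0 - \alpha 2^{k-1}}$. Since $2^{k-1}/k \to +\infty$ as $k\to\infty$, for every fixed $\alpha>0$ the exponent $k\log_2 m_0 - \alpha 2^{k-1} \to -\infty$, hence
\[
\lim_{k\to\infty}\frac{m_0^k}{2^{\alpha 2^{k-1}}} = 0.
\]
Thus condition \eqref{din_h = 0} holds for every $\alpha>0$, and Theorem~\ref{MnVk1} immediately gives $\dim_H\big(C[\bar{O}^2,\{V_k\}]\big)=0$.

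Alternatively, and even more briefly, I would simply note that $m_k = m_0 \le m_0^k$ for all $k\in N$, so the hypothesis of the previous corollary (with $a=m_0$) is satisfied, and the conclusion follows from that corollary. Either route works; I would present the second one-line reduction, since it is the most economical. There is essentially no obstacle here: the only ``content'' is the elementary fact that a doubly-exponential function eventually dominates any exponential one, which is exactly what makes the $\bar O^2$-expansion behave so differently from the continued fraction expansion (where the analogous set $E_2$ has positive dimension).

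\begin{proof}
Since $m_k = m_0 \leq m_0^k$ for all $k \in N$, the hypothesis of the previous corollary holds with $a = m_0$, and therefore $\dim_H (C[\bar{O}^2,\{V_k\}]) = 0$.
\end{proof}
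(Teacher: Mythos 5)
Your proposal is correct and follows exactly the route the paper intends: the corollary is an immediate special case of the preceding corollary (take $a=m_0$, since $m_0\le m_0^k$) or, equivalently, of Theorem~\ref{MnVk1}, whose hypothesis \eqref{din_h = 0} you verify correctly because $2^{\alpha 2^{k-1}}$ dominates $m_0^k$ for every $\alpha>0$. Nothing is missing.
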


Let $B(\bar{O}^2)$ be the set of all real numbers  with bounded $\bar{O}^2$-symbols, i.e.,
$$B(\bar{O}^{2})=\{x:~x=\Delta_{a_{1}(x)\ldots a_{k}(x)\ldots}:~\exists K(x)\in N:~ a_{j}(x)\leq  K(x),~\forall j \in N\}.$$

\begin{theorem}
The set $B(\bar{O}^2)$ of all numbers with bounded $\bar{O}^2$-symbols is an anomalously fractal set,
i.e., the Hausdorff dimension of $B(\bar{O}^2)$ is equal to $0$:
$$\dim_{H}B(\bar{O}^2)=0. $$
\end{theorem}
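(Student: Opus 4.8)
The plan is to exhibit $B(\bar{O}^2)$ as a countable union of sets of the type $C[\bar{O}^2,\{V_k\}]$ with \emph{constant} $V_k$, and then use the countable stability of the Hausdorff dimension together with the corollary following the previous theorem.

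First I would observe that, by the very definition of $B(\bar{O}^2)$, a point $x$ with $\bar{O}^2$-symbols $a_1(x),a_2(x),\dots$ lies in $B(\bar{O}^2)$ if and only if there exists $m=K(x)\in\mathbb{N}$ with $a_j(x)\le m$ for all $j$, i.e.\ if and only if $x\in C[\bar{O}^2,\{V_k^{(m)}\}]$ where $V_k^{(m)}=\{1,2,\dots,m\}$ for every $k$. Hence
$$
B(\bar{O}^2)=\bigcup_{m=1}^{\infty} C\bigl[\bar{O}^2,\{1,2,\dots,m\}\bigr].
$$

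Next, by the corollary to Theorem~\ref{MnVk1} (the case $m_k\equiv m_0$, which is also covered by the corollary with $m_k\le a^k$), each set in this union satisfies
$$
\dim_H\Bigl(C\bigl[\bar{O}^2,\{1,2,\dots,m\}\bigr]\Bigr)=0, \qquad \forall m\in\mathbb{N}.
$$
Indeed, $C[\bar{O}^2,\{1,\dots,m\}]$ is covered by $m^k$ cylinders of rank $k$, each of length $<2^{-(2^{k-1})}$, so $\sum |\overline{\Delta}|^\alpha \le m^k 2^{-\alpha(2^{k-1})}\to 0$ for every $\alpha>0$, since $m^k$ grows only exponentially in $k$ while $2^{\alpha(2^{k-1})}$ grows doubly exponentially.

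Finally I would invoke the countable stability of the Hausdorff dimension: for any countable family $\{E_n\}$ one has $\dim_H\bigl(\bigcup_n E_n\bigr)=\sup_n \dim_H E_n$ (see, e.g., \cite{Fal}). Applying this to the union above gives
$$
\dim_H B(\bar{O}^2)=\sup_{m\in\mathbb{N}}\dim_H\Bigl(C\bigl[\bar{O}^2,\{1,2,\dots,m\}\bigr]\Bigr)=\sup_{m\in\mathbb{N}}0=0,
$$
which proves the theorem. There is essentially no genuine obstacle here: the content is entirely contained in the earlier covering estimate $|\overline{\Delta}_{11\dots1}|<2^{-(2^{k-1})}$ (doubly exponential shrinking of cylinders) and in the countable stability of $\dim_H$; the only point to be careful about is the correct decomposition of $B(\bar{O}^2)$ into a \emph{countable} union, which is immediate once one lets the bound $K(x)$ play the role of the index.
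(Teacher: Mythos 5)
Your proposal is correct and coincides with the paper's own argument: the paper likewise writes $B(\bar{O}^2)=\bigcup_{i}B_i(\bar{O}^2)$ with $B_i(\bar{O}^2)=C[\bar{O}^2,\{1,\dots,i\}]$, applies the corollary of Theorem~\ref{MnVk1} to get $\dim_H B_i(\bar{O}^2)=0$, and concludes by countable stability of the Hausdorff dimension.
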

\begin{proof}
The set $B(\bar{O}^2)$ can be decomposed in the following way:
$$B(\bar{O}^2)=\bigcup_{i=1}^{\infty}B_{i}(\bar{O}^{2}),$$
where $$B_{i}(\bar{O}^{2})=\{x:~a_{j}(x)\leq i,~\forall j \in N \}.$$
Since, $$\dim_{H}B_i(\bar{O}^2)=0,~\forall i\in N, $$
we have $$\dim_{H}B(\bar{O}^2)=\sup_{i}\dim_{H}B_{i}(\bar{O}^{2})=0,$$
which proves the theorem.
\end{proof}

\begin{remark}
From \cite{KP66} it follows that the set of continued fractions with bounded partial quotients is of full Hausdorff dimension:
$$\dim_{H}(B(c.f.))=1, $$
which stresses the essential difference between the dimensional theories for the $\bar{O}^{2}$-expansion and the continued fraction  expansion.
\end{remark}

\textbf{ Acknowledgement } This work was partly supported by DFG 436 UKR
113/97 project, EU project STREVCOMS  and by the Alexander von Humboldt Foundation.

\end{document}